\pdfoutput=1
\RequirePackage{ifpdf}
\ifpdf % We are running pdfTeX in pdf mode
\documentclass[pdftex]{sigma}
\else
\documentclass{sigma}
\fi

\usepackage[all]{xy}
\xyoption{arrow}
\xyoption{graph}

\numberwithin{equation}{section}

\newtheorem{Theorem}{Theorem}[section]
\newtheorem{Lemma}[Theorem]{Lemma}
\newtheorem{Proposition}[Theorem]{Proposition}
 { \theoremstyle{definition}
\newtheorem{Definition}[Theorem]{Definition}
\newtheorem{Remark}[Theorem]{Remark} }

\begin{document}
\allowdisplaybreaks

\newcommand{\arXivNumber}{2003.05890}

\renewcommand{\thefootnote}{}

\renewcommand{\PaperNumber}{069}

\FirstPageHeading

\ShortArticleName{On the Irreducibility of Some Quiver Varieties}

\ArticleName{On the Irreducibility of Some Quiver Varieties\footnote{This paper is a~contribution to the Special Issue on Noncommutative Manifolds and their Symmetries in honour of Giovanni Landi. The full collection is available at \href{https://www.emis.de/journals/SIGMA/Landi.html}{https://www.emis.de/journals/SIGMA/Landi.html}}}

\Author{Claudio BARTOCCI~$^{\dag^1\dag^2}\!$, Ugo BRUZZO~$^{\dag^3\dag^4 \dag^5 \dag^6\dag^7}\!$, Valeriano LANZA~$^{\dag^8}\!$ and Claudio L.S.~RAVA~$^{\dag^1}\!\!\!$}

\AuthorNameForHeading{C.~Bartocci, U.~Bruzzo, V.~Lanza and C.L.S.~Rava}

\Address{$^{\dag^1}$~Dipartimento di Matematica, Universit\`a di Genova, Via Dodecaneso 35, 16146 Genova, Italy}
\EmailDD{\href{mailto:bartocci@dima.unige.it}{bartocci@dima.unige.it}, \href{mailto:clsrava@gmail.com}{clsrava@gmail.com}}
\URLaddressDD{\url{http://www.dima.unige.it/~bartocci/index.htm}}

\Address{$^{\dag^2}$~Laboratoire SPHERE, CNRS, Universit\'e Paris Diderot (Paris 7), 75013 Paris, France}

\Address{$^{\dag^3}$~SISSA (Scuola Internazionale Superiore di Studi Avanzati),\\
\hphantom{$^{\dag^3}$}~Via Bonomea 265, 34136 Trieste, Italy}
\EmailDD{\href{mailto:bruzzo@sissa.it}{bruzzo@sissa.it}}
\URLaddressDD{\url{http://www.people.sissa.it/~bruzzo/webpage/}}

\Address{$^{\dag^4}$~Departamento de Matem\'atica, Universidade Federal da Para\'iba,\\
\hphantom{$^{\dag^4}$}~Campus I, Jo\~ao Pessoa, PB, Brasil}

\Address{$^{\dag^5}$~IGAP (Institute for Geometry and Physics), Trieste, Italy}

\Address{$^{\dag^6}$~INFN (Istituto Nazionale di Fisica Nucleare), Sezione di Trieste, Italy}

\Address{$^{\dag^7}$~Arnold-Regge Center for Algebra, Geometry and Theoretical Physics, Torino, Italy}

\Address{$^{\dag^8}$~Departamento de An\'alise, IME, Universidade Federal Fluminense,\\
\hphantom{$^{\dag^8}$}~Rua Professor Marcos Waldemar de Freitas Reis, Niter\'oi, RJ, Brazil}
\EmailDD{\href{mailto:vlanza@id.uff.br}{vlanza@id.uff.br}}

\ArticleDates{Received March 13, 2020, in final form July 10, 2020; Published online July 26, 2020}

\Abstract{We prove that certain quiver varieties are irreducible and therefore are isomorphic to Hilbert schemes of points of the total spaces of the bundles $\mathcal O_{\mathbb P^1}(-n)$ for $n \ge 1$.}

\Keywords{quiver representations; Hilbert schemes of points}

\Classification{14D20; 14D21; 14J60; 16G20}

\begin{flushright}
\begin{minipage}{96mm}
\it We thank Gianni Landi for the long friendship and for all \\ collaborations that took place over the last 35 years
\end{minipage}
\end{flushright}

\renewcommand{\thefootnote}{\arabic{footnote}}
\setcounter{footnote}{0}

\section{Introduction}

 Nakajima's quiver varieties were introduced by Hiraku Nakajima in \cite{Naka-ALE} to study the moduli spaces of instantons on ALE spaces, and have been extensively studied since then, see, e.g., \cite{Ginz,Kuz,Nakabook,Naka-ring}. They provide a modern and significant example of how algebra and geometry can be sometimes so deeply, yet surprisingly connected: in fact, their main feature is that they allow one to put in relation some moduli spaces of bundles (or torsion-free sheaves) over certain smooth projective varieties with some moduli spaces of representations of suitable algebras (the so-called \emph{path algebras} of a~quiver and quotients of them). A major example of this bridge is given by the moduli space of framed sheaves on~$\mathbb{P}^2$, which can be identified with the moduli space of semistable representations of the ADHM quiver (see~\cite{Nakabook} for details).

 The way this relation is usually looked at is the one that inspired Nakajima's first pioneering work: the philosophy is to use the algebraic data we get on one side (usually called \emph{ADHM data}) to parameterize the geometric moduli spaces we have on the other side, i.e., the objects we are actually interested in (see for example \cite{BFMT, Dorey,Niki}). But sometimes it may be useful to switch roles and use the geometric interpretation as a ``tool'' to prove something interesting per se on the algebraic side. For instance, this is the case when one deals with irreducibility problems: to determine whether a variety of matrices is irreducible is known to be a challenging problem (see~\cite{Ngo-Siv} and references therein), and in the specific case of Nakajima's quiver varieties the conclusive result by Crawley-Boevey stating that \emph{all of them} are indeed irreducible has been achieved only by using hyperk\"ahler geometry techniques~\cite{CrBo}.

 In \cite{bblr} we introduced a collection of new quiver varieties $\mathcal{M} (\Lambda_n,\vec{v}_c,w_c, \vartheta_c)$, ${n\geq1}$ (see below for the notation);
 for $n\neq 2$ they are not Nakajima's quiver varieties, as the quivers involved are not doubles. We proved that~$\mathcal{M} (\Lambda_1,\vec{v}_c,w_c, \vartheta_c)$ is isomorphic to the Hilbert scheme of points of the total space of $\mathcal{O}_{\mathbb{P}^1}(-1)$, and, in particular, that it is therefore irreducible (as the Hilbert scheme is so~\cite{Fogarty}). For $n\geq 2$ we only proved a weaker result, i.e.,
 that only a certain connected component of $\mathcal{M} (\Lambda_n,\vec{v}_c,w_c, \vartheta_c)$ can be identified with $\operatorname{Hilb}^c(\operatorname{Tot}(\mathcal{O}_{\mathbb{P}^1}(-n)))$. However, as $\mathcal{M} (\Lambda_2,\vec{v}_c,w_c, \vartheta_c)$ is a Nakajima quiver variety, its irreducibility follows from Crawley-Boevey's result, so that one only has to determine whether the varieties $\mathcal{M} (\Lambda_n,\vec{v}_c,w_c, \vartheta_c)$ are irreducible for $n\geq 3$. In this paper we prove this fact, completing the work of \cite{bblr}, actually showing directly
 that $\operatorname{Hilb}^c(\operatorname{Tot}(\mathcal{O}_{\mathbb{P}^1}(-n)))$ is isomorphic to the whole
$\mathcal{M} (\Lambda_n,\vec{v}_c,w_c, \vartheta_c)$. As this technique also works for the case $n=2$ we include it as well.

\section{Some background } \label{background}
The quivers we are going to consider are extracted from the ADHM data for the Hilbert schemes of points of the varieties $\operatorname{Hilb}^c(X_n)$,
 where $X_n$ is the total space of the line bundle $\mathcal{O}_{\mathbb{P}^1}(-n)$,
and, in turn,
the construction of the ADHM data is based on the description of the moduli spaces of framed sheaves on the Hirzebruch surfaces $\Sigma_n$ in terms of monads that was given in~\cite{bbr}.
We denote by $H$ and $E$ the classes in $\operatorname{Pic}(\Sigma_n)$ of the sections
of the natural ruling $\Sigma_n\to\mathbb{P}^1$ that square to
$n$ and $-n$, respectively.
We fix a curve $\ell_{\infty}$ in $\Sigma_n$ belonging to the class $H$ (the ``line at infinity'').
A framed sheaf on $\Sigma_n$ is a pair $(\mathcal{E}, \theta)$, where $\mathcal{E}$ is a rank $r$ torsion-free sheaf which
is trivial along $\ell_{\infty}$, and $\theta \colon \mathcal{E}\vert_{\ell_{\infty}}\stackrel{\sim}{\to}\mathcal{O}_{\ell_{\infty}}^{\oplus r}$ is an isomorphism. A morphism between framed sheaves $(\mathcal{E}, \theta)$, $(\mathcal{E}', \theta')$ is by definition a morphism $\Lambda\colon \mathcal{E} \longrightarrow \mathcal{E}'$ such that
$\theta'\circ\Lambda\vert_{\ell_{\infty}} = \theta$. The moduli space parameterizing isomorphism classes of framed sheaves $(\mathcal{E}, \theta)$ on $\Sigma_n$ with Chern character $\textrm{ch}(\mathcal{E}) = \big(r, aE, -c -\frac{1}{2} na^2\big)$, where $r, a, c \in \mathbb{Z}$ and $r\geq 1$, will be denoted $\mathcal{M}^{n}(r,a,c)$. We normalize the framed sheaves so that $0\leq a\leq r-1$.

A monad $M$ on a scheme $X$ is a three-term complex of locally free $\mathcal O_X$-modules of finite rank, having nontrivial cohomology only at the middle term (cf.~\cite[Definition~II.3.1.1]{Ok}).
It was proved in~\cite{bbr} that a~framed sheaf
$(\mathcal{E}, \theta)$ on $\Sigma_n$ with invariants $(r,a,c)$ is the cohomology of a~monad
\begin{equation}
M(\alpha,\beta)\colon \ \xymatrix{0 \ar[r] & \mathcal{U}_{\vec{k}} \ar[r]^-{\alpha} & \mathcal{V}_{\vec{k}} \ar[r]^-{\beta} & \mathcal{W}_{\vec{k}} \ar[r] & 0
} , \label{fundamentalmonad}
\end{equation}
where $\vec{k}$ is the quadruple $ (n,r,a,c)$, and
\begin{equation*}
\mathcal{U}_{\vec{k}}:=\mathcal{O}_{\Sigma_n}(0,-1)^{\oplus k_1},\qquad
\mathcal{V}_{\vec{k}}:=\mathcal{O}_{\Sigma_n}(1,-1)^{\oplus k_2} \oplus \mathcal{O}_{\Sigma_n}^{\oplus k_4},\qquad
\mathcal{W}_{\vec{k}}:=\mathcal{O}_{\Sigma_n}(1,0)^{\oplus k_3} ,
\end{equation*}
with
\begin{equation*}
 k_1=c+\dfrac{1}{2}na(a-1),\qquad
k_2=k_1+na,\qquad
k_3=k_1+(n-1)a,\qquad
k_4=k_1+r-a .
%\label{k_i}
\end{equation*}

The space $L_{\vec{k}}$ of pairs in $\operatorname{Hom}(\mathcal{U}_{\vec{k}},\mathcal{V}_{\vec{k}})\oplus\operatorname{Hom}(\mathcal{V}_{\vec{k}},\mathcal{W}_{\vec{k}})$ fitting into~\eqref{fundamentalmonad},
such that the cohomology of the complex is torsion-free and trivial at infinity, is a smooth algebraic variety.
There is a principal $\operatorname{GL}(r,\mathbb{C})$-bundle $P_{\vec{k}}$ over $L_{\vec{k}}$ whose fibre at a point $(\alpha,\beta)$ is the space of framings for the corresponding cohomology of~\eqref{fundamentalmonad}.
The algebraic group
\[
G_{\vec{k}}=\operatorname{Aut}(\mathcal{U}_{\vec{k}})\times\operatorname{Aut}(\mathcal{V}_{\vec{k}})\times\operatorname{Aut}(\mathcal{W}_{\vec{k}})
\]
acts freely on $P_{\vec{k}}$, and the moduli space $\mathcal{M}^{n}(r,a,c)$ is the quotient $P_{\vec{k}}/G_{\vec{k}}$ \cite[Theorem 3.4]{bbr}. This is nonempty if and only if $c + \frac{1}{2} na(a-1) \geq 0$, and when nonempty, it is a smooth algebraic variety of dimension $2rc + (r-1) na^2$.

When $r=1$ we can assume $a=0$, and there is an identification
\begin{equation*}
\mathcal{M}^{n}(1,0,c) \simeq \operatorname{Hilb}^c (\Sigma_n \setminus \ell_{\infty}) = \operatorname{Hilb}^c (X_{n}) .
%\label{eqMH}
\end{equation*}

A first step to construct ADHM data for the Hilbert schemes of points of the varieties $X_n$
is to show that the Hilbert schemes can be covered by open subsets that are isomorphic to the Hilbert scheme of $\mathbb C^2$,
and therefore have an ADHM description, according to Nakajima. Then one proves that these ``local ADHM data'' can be glued to provide ADHM data for the Hilbert schemes of $X_n$.

Let $P^{n}(c)$ be the set of collections $(A_1,A_2;C_1,\dots,C_{n};e)$
in $\operatorname{End}(\mathbb{C}^{c})^{\oplus n+2}\oplus\operatorname{Hom}(\mathbb{C}^{c},\mathbb{C})$ sa\-tisfying the conditions\\[8pt]
(P1)\ \ \
$\displaystyle
\begin{cases}
A_1C_1A_{2}=A_2C_{1}A_{1},&\qquad\text{when $n=1$},\\[2pt]
\begin{aligned}
A_1C_q&=A_2C_{q+1},\\
C_qA_1&=C_{q+1}A_2
\end{aligned}
\qquad\text{for}\quad q=1,\dots,n-1,&\qquad\text{when $n>1$;}
\end{cases}
$
\\[8pt]
(P2) \
$A_1+\lambda A_2$ is a \emph{regular pencil} of matrices, i.e., there exists $[\nu_{1},\nu_{2}]\in\mathbb{P}^1$ such that $\det(\nu_1A_1+\nu_2A_2)\neq0$;
\\[8pt]
(P3) \
for all values of the parameters $ ([\lambda_1,\lambda_2],(\mu_1,\mu_{2}) )\in\mathbb{P}^1\times\mathbb{C}^{2}$ satisfying
\begin{equation*}
\lambda_{1}^{n}\mu_{1}+\lambda_{2}^{n}\mu_{2}=0
\end{equation*}
there is no nonzero vector $v\in\mathbb{C}^c$ such that
\begin{equation*}
\begin{cases}
C_{1}A_{2}v=-\mu_1v,\\
C_{n}A_{1}v=(-1)^n\mu_2v,\\
v\in\ker e
\end{cases}
\qquad\text{and}\qquad\left(\lambda_2{A_1}+\lambda_1{A_2}\right)v=0 .
\end{equation*}

The group
$\operatorname{GL}(c, \mathbb{C})\times \operatorname{GL}(c, \mathbb{C})$ acts on $P^n(c)$ according to
\begin{equation*} (A_i, C_j, e ) \mapsto \big(\phi_2A_i\phi_1^{-1},\phi_1C_j\phi_2^{-1}, e\phi_1^{-1}\big)
%\label{eqrho}
\end{equation*}
for $i=1,2$, $j=1,\dots,n$, $ (\phi_1,\phi_2)\in\operatorname{GL}(c, \mathbb{C})\times \operatorname{GL}(c, \mathbb{C})$.

 The following result expresses the fact that the collections $(A_1,A_2;C_1,\dots,C_{n};e)$ satisfying
 conditions (P1) to (P3) are ADHM data for the varieties $\operatorname{Hilb}^c (X_{n})$ (this is Theorem~3.1 in~\cite{bbr}).
 \begin{Theorem}\label{thm0}
$P^{n}(c)$ is a principal $\operatorname{GL}(c, \mathbb{C})\times \operatorname{GL}(c, \mathbb{C})$-bundle over $\operatorname{Hilb}^c (X_{n})$.
\end{Theorem}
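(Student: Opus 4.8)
The plan is to exhibit $P^n(c)$ as a reduction of the monad data of Section~\ref{background} in the special case $(r,a,c)=(1,0,c)$, and then to identify the quotient by $\operatorname{GL}(c,\mathbb C)\times\operatorname{GL}(c,\mathbb C)$ with $\mathcal M^n(1,0,c)\simeq\operatorname{Hilb}^c(X_n)$. Setting $r=1$ and $a=0$ yields $k_1=k_2=k_3=c$ and $k_4=c+1$, so in \eqref{fundamentalmonad} one has $\mathcal U_{\vec k}=\mathcal O_{\Sigma_n}(0,-1)^{\oplus c}$, $\mathcal V_{\vec k}=\mathcal O_{\Sigma_n}(1,-1)^{\oplus c}\oplus\mathcal O_{\Sigma_n}^{\oplus(c+1)}$ and $\mathcal W_{\vec k}=\mathcal O_{\Sigma_n}(1,0)^{\oplus c}$. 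The first step is to expand $\alpha$ and $\beta$ in chosen bases: every homomorphism between two line-bundle summands is multiplication by a global section, and the relevant spaces of sections of line bundles on $\Sigma_n$ have exactly the dimensions needed to produce, as the matrix blocks of $\alpha$ and $\beta$, the two endomorphisms $A_1,A_2$ (the base-$\mathbb P^1$ pencil), the $n$ endomorphisms $C_1,\dots,C_n$ (recording the fibre direction, twisted by $\mathcal O_{\mathbb P^1}(-n)$), and the covector $e$ (coming from the extra trivial summand of $\mathcal V_{\vec k}$, which carries the framing). This sets up a $G_{\vec k}$-equivariant parametrization of $L_{\vec k}$ by tuples of such matrices.

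Second, I would reduce the symmetry group. Since $\mathcal U_{\vec k}$ and $\mathcal W_{\vec k}$ are isotypic, $\operatorname{Aut}(\mathcal U_{\vec k})\cong\operatorname{GL}(c,\mathbb C)$ and $\operatorname{Aut}(\mathcal W_{\vec k})\cong\operatorname{GL}(c,\mathbb C)$, whereas $\operatorname{Aut}(\mathcal V_{\vec k})$ is strictly larger, containing --- besides the block linear groups --- the unipotent ``off-diagonal'' automorphisms induced by $\operatorname{Hom}(\mathcal O_{\Sigma_n}(1,-1),\mathcal O_{\Sigma_n})$ and $\operatorname{Hom}(\mathcal O_{\Sigma_n},\mathcal O_{\Sigma_n}(1,-1))$. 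I would use this extra freedom to bring $(\alpha,\beta)$ into a normal form in which the only surviving parameters are $(A_1,A_2;C_1,\dots,C_n;e)$ and the residual symmetry is precisely $\operatorname{Aut}(\mathcal U_{\vec k})\times\operatorname{Aut}(\mathcal W_{\vec k})\cong\operatorname{GL}(c,\mathbb C)\times\operatorname{GL}(c,\mathbb C)$, acting through the pair $(\phi_1,\phi_2)$ exactly by the formula stated before the theorem. Proving that this normal form exists and is reached \emph{uniquely} up to the two $\operatorname{GL}(c,\mathbb C)$ factors --- so that $\operatorname{Aut}(\mathcal V_{\vec k})$ is entirely absorbed and no extra stabilizer survives --- is the technical heart of the argument and the step I expect to be the main obstacle.

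Third comes the matching of conditions. Unwinding $\beta\circ\alpha=0$ block by block should reproduce the commutation relations (P1): the Hankel-type shape $A_1C_q=A_2C_{q+1}$, $C_qA_1=C_{q+1}A_2$ reflects that the composite lands in a space of sections whose vanishing is governed by the degree-$n$ twist. The requirements defining a monad --- $\alpha$ fibrewise injective, $\beta$ fibrewise surjective --- together with triviality of the cohomology along $\ell_\infty$ should be shown equivalent to regularity of the pencil $A_1+\lambda A_2$, i.e., (P2); and torsion-freeness of the middle cohomology (equivalently, that it be the ideal sheaf of a length-$c$ subscheme of $X_n=\Sigma_n\setminus\ell_\infty$) should translate into the stability condition (P3), whose parameters $([\lambda_1,\lambda_2],(\mu_1,\mu_2))$ constrained by $\lambda_1^n\mu_1+\lambda_2^n\mu_2=0$ are exactly the points of the fibres of $X_n\to\mathbb P^1$. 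I would check each equivalence pointwise over $\mathbb P^1$, including the points lying on $\ell_\infty$.

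Finally I would assemble the bundle structure. The three steps give a $G_{\vec k}$-equivariant isomorphism between the reduced data and $P^n(c)$, hence a surjective map $P^n(c)\to\mathcal M^n(1,0,c)=\operatorname{Hilb}^c(X_n)$ whose fibres are the $\operatorname{GL}(c,\mathbb C)\times\operatorname{GL}(c,\mathbb C)$-orbits. Freeness of the action follows from (P3): any $(\phi_1,\phi_2)$ fixing a datum would produce, through the associated eigenvector equations, a nonzero $v\in\ker e$ of the kind (P3) forbids, forcing $(\phi_1,\phi_2)$ to be trivial. Local triviality is then inherited from the geometric quotient $P_{\vec k}\to\mathcal M^n(1,0,c)$ by passing to a local slice for the residual two-factor action, which establishes the claimed principal $\operatorname{GL}(c,\mathbb C)\times\operatorname{GL}(c,\mathbb C)$-bundle structure over $\operatorname{Hilb}^c(X_n)$.
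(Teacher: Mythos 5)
A preliminary remark: the paper does not prove Theorem~\ref{thm0} at all --- it is imported from~\cite{bbr} (cf.\ also~\cite{bblr}), and the only indication given here of how it is established is the paragraph immediately preceding it. That strategy is local-to-global: one covers $\operatorname{Hilb}^c(X_n)$ by open subsets of the form $\operatorname{Hilb}^c$ of the complement in $X_n$ of a fibre of $X_n\to\mathbb{P}^1$, each isomorphic to $\operatorname{Hilb}^c\big(\mathbb{C}^2\big)$ (a length-$c$ subscheme misses some fibre, which is what condition (P2) encodes on the matrix side), applies Nakajima's ADHM theorem on each chart, and then glues the local data into the global collections $(A_1,A_2;C_1,\dots,C_n;e)$. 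Your route --- expanding the monad~\eqref{fundamentalmonad} with $(r,a,c)=(1,0,c)$ into matrix blocks and reducing modulo $G_{\vec{k}}$ to a normal form --- is genuinely different; it is the classical derivation of ADHM data on $\mathbb{P}^2$ transplanted to $\Sigma_n$. Your rank bookkeeping ($k_1=k_2=k_3=c$, $k_4=c+1$) is correct, and the trade-off is real: the chart approach never has to touch $\operatorname{Aut}(\mathcal{V}_{\vec{k}})$ but must prove a gluing statement, while yours produces the global matrices in one stroke at the price of a slice theorem for $\operatorname{Aut}(\mathcal{V}_{\vec{k}})$.

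That slice theorem is where the proposal has a genuine gap, which you flag yourself. Expanding $\alpha$ and $\beta$ in bases of the relevant section spaces produces a priori many more independent matrices than the $n+3$ tensors appearing in $P^n(c)$, and $\operatorname{Aut}(\mathcal{V}_{\vec{k}})$ is a parabolic-type group whose unipotent radical is itself built out of nonzero $\operatorname{Hom}$'s between the two isotypic blocks of $\mathcal{V}_{\vec{k}}$. You must show that on \emph{all} of $L_{\vec{k}}$ --- not merely on a dense open subset --- each $G_{\vec{k}}$-orbit meets the proposed normal form in exactly one $\operatorname{GL}(c,\mathbb{C})\times\operatorname{GL}(c,\mathbb{C})$-orbit; existence typically requires the fibrewise injectivity/surjectivity of $\alpha$ and $\beta$ along $\ell_\infty$, and without uniqueness there is no well-defined morphism between $P^n(c)$ and $L_{\vec{k}}/G_{\vec{k}}$, let alone a principal bundle. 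Relatedly, the dictionary ``monad conditions plus triviality along $\ell_\infty$ $\Leftrightarrow$ (P2)'' and ``torsion-freeness $\Leftrightarrow$ (P3)'' is asserted rather than derived: the definition of $L_{\vec{k}}$ in Section~\ref{background} bundles all of these requirements together, and disentangling them into (P1)--(P3) in the normalized gauge is most of the actual content of the theorem. The concluding steps are fine in outline (freeness of the action from (P3) via the standard invariant-subspace argument, and Zariski-local triviality because $\operatorname{GL}(c,\mathbb{C})\times\operatorname{GL}(c,\mathbb{C})$ is a special group acting freely with geometric quotient), but they rest on the unproved normalization.
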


\section{The main result}\label{sectionmainresult}

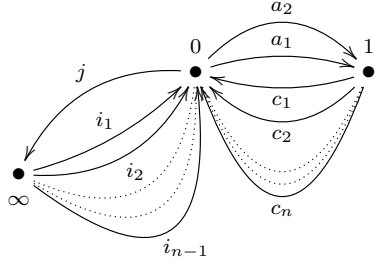
\begin{figure}[t]\centering
$ \xymatrix@R-2.3em{
&&\mbox{\scriptsize$0$}&&\mbox{\scriptsize$1$}\\
&&\bullet\ar@/_3ex/[llddddddd]_{j}\ar@/^/[rr]^{a_{1}}\ar@/^4ex/[rr]^{a_{2}}&&\bullet\ar@/^/[ll]^{c_{1}}
\ar@/^4ex/[ll]^{c_2}\ar@{..}@/^8ex/[ll]\ar@{..}@/^9ex/[ll]
\ar@/^10ex/[ll]^{c_{n}}& \\ \\ \\ \\ \\&&&&& \\ \\
\bullet\ar@/_/[rruuuuuuu]^{i_1}\ar@/_3ex/[rruuuuuuu]_{i_2}\ar@{..}@/_6ex/[rruuuuuuu]\ar@{..}@/_8ex/[rruuuuuuu]\ar@/_10ex/[rruuuuuuu]_{i_{n-1}}&&&&&\\
\mbox{\scriptsize$\infty$}&&&&
}
$
\caption{The quivers $Q_n$.}\label{Qn}
\end{figure}
Now we turn to the purpose of this paper, namely, proving that the Hilbert schemes of points of the varieties $X_n$ are
isomorphic to moduli spaces of representations of suitable quivers.
 For any $n\geq2$ let $Q_{n}$ be the framed quiver in Fig.~\ref{Qn}, where $\infty$ is the framing vertex. Let $J_{n}$ be the two sided ideal of $\mathbb{C} Q_{n}$ generated by the relations
\begin{equation}
\begin{cases}
a_2c_{q+1}-a_1c_q =0,\\
c_{q+1}a_2-c_qa_1-i_qj =0
\end{cases}
\qquad\text{for}\quad q=1,\dots,n-1 .
\label{eq-gen-J}
\end{equation}
Our purpose is to describe the spaces of representations of the quiver $Q_n$ with relations $J_{n}$, i.e., the spaces of representations of the quotient algebra $\Lambda_{n}=\mathbb{C} Q_{n}/J_{n}$.

We recall some basic definitions. Given $\vec{v} = (v_0, v_1)\in \mathbb{N}^2$ and $w\in \mathbb{N}$,
a {\it $(\vec{v},w)$-dimensional representation of $\Lambda_n$}
is the datum of a triple of $\mathbb{C}$-vector spaces $V_0$, $V_1$, $W$, with $\dim V_i = v_i$, $\dim W=w$, and of an element
$(A_1,A_2;C_1, \dots,C_n;e;f_1,\dots,f_{n-1})$ in
\[
\operatorname{Hom}_\mathbb{C}(V_0,V_1)^{\oplus2}\oplus\operatorname{Hom}_\mathbb{C}(V_1,V_0)^{\oplus n}\oplus\operatorname{Hom}_\mathbb{C}(V_0,W)\oplus\operatorname{Hom}_\mathbb{C}(W,V_0)^{\oplus n-1}
\]
 satisfying the relations determined by equations~\eqref{eq-gen-J}, namely
 \begin{equation}
\begin{cases}
A_2 C_{q+1}- A_1 C_q =0,\\
C_{q+1} A_2 - C_q A_1- f_q e =0
\end{cases}
\qquad\text{for}\quad q=1,\dots,n-1 .
\label{eq-gen-Q1}\tag{Q1}
\end{equation}

The space $\operatorname{Rep}(\Lambda_n ,\vec{v},w)$ of all $(\vec{v},w)$-dimensional representations of $\Lambda_n$
is an affine variety, on which the group $G_{\vec{v}}=\operatorname{GL}(v_0,\mathbb{C})\times\operatorname{GL}(v_1,\mathbb{C})$ acts by basis change. Indeed, we ignore the action of $\operatorname{GL}(w,\mathbb{C})$ on the vector space~$W$ attached to the framing vertex. As usual, to get a~well behaved quotient space one has to perform a GIT construction by introducing a suitable notion of stability. This was done by A.~King \cite{KiQ} and, in a slightly different way, by A.~Rudakov~\cite{Rud97}. In the case of a quiver with a framing vertex, the following definition can be shown to be equivalent to the King--Rudakov one \cite{blr, CrBo}.
\begin{Definition}\label{def:stabil}
 Fix $\vartheta\in\mathbb R^2$. A $(\vec{v},w)$-dimensional representation {$(V_0,V_1,W)$} of $\Lambda_n$ is said to be \emph{$\vartheta$-semistable} if, for any subrepresentation $S=(S_0,S_1)$ {$\subseteq (V_0,V_1)$}, one has:
\begin{gather}
 \text{if $S_0\subseteq \ker e$, then $\vartheta\cdot(\dim S_0,\dim S_1)\leq0$} ;\label{eq:stabnak1}\\[5pt]
 \text{if $S_0\supseteq \operatorname{Im} f_i$ for $i=1,\dots,n-1$, then $\vartheta\cdot(\dim S_0,\dim S_1)\leq\vartheta\cdot(v_0,v_1)$} .\label{eq:stabnak2}
\end{gather}
A $\vartheta$-semistable representation is \emph{$\vartheta$-stable} if a strict inequality holds in \eqref{eq:stabnak1} whenever $S\neq0$ and
in \eqref{eq:stabnak2} whenever $S\neq(V_0,V_1)$.
\end{Definition}
Let $\operatorname{Rep}(\Lambda_n, \vec{v}, w)^{\rm ss}_\vartheta$ be the subset of $\operatorname{Rep}(\Lambda_n,\vec{v},w)$ consisting of $\vartheta$-semistable representations. By \cite[Proposition 5.2]{KiQ}, the coarse moduli space of
$(\vec{v},w)$-dimensional $\vartheta$-semistable representations of $\Lambda_n$ is the GIT quotient
\[\mathcal{M} (\Lambda_n,\vec{v},w, \vartheta)=\operatorname{Rep}(\Lambda_n, \vec{v}, w)^{\rm ss}_\vartheta/\!/ G_{\vec{v}} .\]
It can be proved that the open subset $\mathcal{M}^{\rm s} (\Lambda_n,\vec{v},w, \vartheta) \subset \mathcal{M} (\Lambda_n,\vec{v},w, \vartheta)$
consisting of stable representations makes up a fine moduli space. Notice that, for quivers without a framing, this
holds only when the dimension vector is primitive \cite[Proposition~5.3]{KiQ}, whilst this requirement is not necessary in the case of framed quivers~\cite{CrBo}. Theorem~4.5 of~\cite{bblr} states that the Hilbert scheme of points
$\operatorname{Hilb}^c (X_n)$ can be embedded into $\mathcal{M} (\Lambda_n,\vec{v},w, \vartheta)$
for suitable choices of $\vec{v}$, $w$, and $\vartheta$. Precisely, one has the following result:
\begin{Theorem}\label{thm45}
For every $n\geq 2$ and $c\geq 1$ let
\[\vec{v}_c=(c,c) ,\qquad w_c = 1 ,\qquad \vartheta_c = (2c, 1-2c ),\]
and let
$\mathcal{H} (n,c) $ be the irreducible component of $\mathcal{M}(\Lambda_n,\vec{v}_c, 1, \vartheta_c) $
given by the equations
\begin{equation}\label{sliceeqs}
f_1 = f_2 = \cdots = f_{n-1} =0 .
\end{equation}
Then $\operatorname{Hilb}^c (X_n)\simeq \mathcal{H} (n,c) $.
\end{Theorem}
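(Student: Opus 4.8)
My plan is to realize the component $\mathcal{H}(n,c)$ as the GIT quotient of the ADHM space $P^{n}(c)$ of Theorem~\ref{thm0}, by showing that on the locus $\{f_1=\cdots=f_{n-1}=0\}$ the $\vartheta_c$-stable representations are \emph{exactly} the collections satisfying conditions (P1)--(P3). First I would observe that, setting all $f_q=0$, the relations \eqref{eq-gen-Q1} specialize precisely to the relations (P1), while the remaining datum $(A_1,A_2;C_1,\dots,C_n;e)$ is exactly a point of the ambient space $\operatorname{End}(\mathbb{C}^{c})^{\oplus n+2}\oplus\operatorname{Hom}(\mathbb{C}^{c},\mathbb{C})$ of $P^{n}(c)$. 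Writing $(\phi_1,\phi_2)=(g_0,g_1)$ for the basis changes at the vertices $0,1$, one checks that the $G_{\vec{v}}=\operatorname{GL}(c,\mathbb{C})\times\operatorname{GL}(c,\mathbb{C})$-action on representations coincides with the action recalled on $P^{n}(c)$, and that it preserves $\{f_q=0\}$ since $f_q\mapsto\phi_1 f_q$. Thus everything reduces to matching $\vartheta_c$-stability with (P2)--(P3) and then invoking Theorem~\ref{thm0}.

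Next I would unwind Definition~\ref{def:stabil} for $\vec{v}_c=(c,c)$ and $\vartheta_c=(2c,1-2c)$, so that $\vartheta_c\cdot(c,c)=c$. With $f_q=0$ the hypothesis $S_0\supseteq\operatorname{Im}f_q$ in \eqref{eq:stabnak2} holds automatically, and a direct numerical check shows that semistability becomes the conjunction of two conditions: (A) every subrepresentation $(S_0,S_1)$ satisfies $\dim S_0\le\dim S_1$, coming from \eqref{eq:stabnak2}; and (B) there is no nonzero subrepresentation with $S_0\subseteq\ker e$ and $\dim S_0=\dim S_1$, coming from \eqref{eq:stabnak1}. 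The same arithmetic shows that, once (A) and (B) hold, the strict inequalities defining stability are automatic; hence on this locus $\vartheta_c$-stable $=$ $\vartheta_c$-semistable, the quotient is geometric, and it remains only to show that (A) and (B) together are equivalent to (P2) and (P3).

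The crux is the equivalence of (B) with (P3). Here I would exploit the relations (P1): on a subrepresentation $(S_0,S_1)$ with $\dim S_0=\dim S_1$ on which $A_2\colon S_0\to S_1$ is invertible, one computes $A_2^{-1}A_1\cdot C_1A_2=A_2^{-1}(A_1C_1)A_2=A_2^{-1}(A_2C_2)A_2=C_2A_2=C_1A_1=C_1A_2\cdot A_2^{-1}A_1$, so the endomorphisms $A_2^{-1}A_1$ and $C_1A_2$ of $S_0$ commute and admit a common eigenvector $v$. Then $A_1v=\rho\,A_2v$ and $C_1A_2v=-\mu_1v$, and the chain $C_{q+1}A_2=C_qA_1$ propagates these to $C_qA_2v=-\mu_1\rho^{\,q-1}v$ and $C_nA_1v=(-1)^n\mu_2v$, with $\mu_2$ forced so that $\lambda_1^n\mu_1+\lambda_2^n\mu_2=0$ for $[\lambda_1,\lambda_2]=[-\rho,1]$; if moreover $S_0\subseteq\ker e$, this $v$ is precisely a vector forbidden by (P3). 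Conversely a (P3)-forbidden $v$ spans a subrepresentation $(\langle v\rangle,\langle A_2v\rangle)$ with $S_0\subseteq\ker e$ and $\dim S_0=\dim S_1$, violating (B), or, when $A_2v=0$, a subrepresentation $(\langle v\rangle,0)$ violating (A) instead. The degenerate cases, where neither $A_1$ nor $A_2$ is invertible on $S_0$, are handled by the boundary values $[\lambda_1,\lambda_2]\in\{[1,0],[0,1]\}$ of the pencil.

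The equivalence of (A) with (P2) is the step I expect to be the main obstacle. The implication from (P2) to (A) is immediate: if a subrepresentation had $\dim S_0>\dim S_1$, then $\nu_1A_1+\nu_2A_2$ would map the larger space $S_0$ into the smaller space $S_1$ and so be singular for every $[\nu_1,\nu_2]$, contradicting (P2). The difficulty is the converse, where from a \emph{singular} pencil I must manufacture a subrepresentation with $\dim S_0>\dim S_1$. The Kronecker minimal-index vectors $v_0,\dots,v_d$ satisfy $A_2v_0=0$, $A_1v_{k-1}+A_2v_k=0$ and $A_1v_d=0$, so $S_0=\langle v_0,\dots,v_d\rangle$ has $\dim(A_1S_0+A_2S_0)<\dim S_0$, but this $S_0$ need not be stable under the $C_q$. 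The plan is to propagate the minimal-index relation through the operator $C_1A_2$: the $AC$- and $CA$-relations give $A_1(C_1A_2v_{k-1})=-A_2(C_1A_2v_k)$, so the $C_1A_2$-orbit of $\langle v_0,\dots,v_d\rangle$ closes up (by finite-dimensionality) to a genuine subrepresentation on whose successive layers the combined $A_i$-images keep spanning strictly fewer dimensions than the domain, yielding the required destabilizing subrepresentation; controlling these $C_q$-images is the technical heart, and one may alternatively appeal to the geometric meaning of (P2) as triviality along $\ell_\infty$ from~\cite{bbr}. Granting both equivalences, the $\vartheta_c$-stable points with $f_q=0$ coincide with $P^{n}(c)$; by Theorem~\ref{thm0} the geometric quotient by $G_{\vec{v}}$ is $\operatorname{Hilb}^c(X_n)$, an irreducible variety of dimension $2c$, which is thereby identified with the component $\mathcal{H}(n,c)$ cut out by $f_1=\cdots=f_{n-1}=0$, giving $\operatorname{Hilb}^c(X_n)\simeq\mathcal{H}(n,c)$.
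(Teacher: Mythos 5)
Your strategy is the same one the paper itself adopts: the text states explicitly that the proof of Theorem~\ref{thm45} ``basically consists in proving that the counterimage $\operatorname{pr}^{-1}(\mathcal{H}(n,c))=:Z_n(c)$ coincides with the total space of the principal fibration $P^n(c)$'' and then defers all details to \cite{bblr}. Your reduction is correctly set up: with $f_1=\dots=f_{n-1}=0$ the relations \eqref{eq-gen-Q1} become (P1), the two group actions match, the arithmetic turning Definition~\ref{def:stabil} into your conditions (A) and (B) is right (as is the observation that stability and semistability coincide for $\vartheta_c$, cf.\ the Remark following Theorem~\ref{thm45}), and the final appeal to Theorem~\ref{thm0} is legitimate once $Z_n(c)=P^n(c)$ is established. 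So the skeleton is sound and faithful to the intended argument.

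Two of the equivalences, however, are left with genuine gaps. The serious one is (A) $\Rightarrow$ (P2), which you flag yourself. Your sketch --- ``the $C_1A_2$-orbit of $\langle v_0,\dots,v_d\rangle$ closes up by finite-dimensionality'' --- does not produce a destabilizing subrepresentation: saturating under the $C_q$'s can enlarge $S_1$ as well as $S_0$, and nothing in ``closing up'' preserves the inequality $\dim S_1<\dim S_0$ that holds on the initial Kronecker block. The missing idea is the one used in case~(i) of the proof of Proposition~\ref{prop-regular-in-R'}: since the $f_qe$ terms vanish, the vectors $C_qA_1v_\alpha$ themselves assemble into a polynomial solution of the pencil of degree $\varepsilon-1$, so the \emph{minimality} of $\varepsilon$ forces them all to be zero; hence the saturation stops after one step, the subrepresentation is exactly $\bigl(U_0,A_1(U_0)\bigr)$, and $\dim S_1<\dim S_0$ because $0\neq v_0\in\ker A_1\cap U_0$. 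Without this vanishing argument the implication is not proved. The second gap is in (P3) $\Rightarrow$ (B): your commutator computation needs $A_2|_{S_0}\colon S_0\to S_1$ to be invertible, and ``handled by the boundary values of the pencil'' does not cover the case where both $A_1|_{S_0}$ and $A_2|_{S_0}$ are singular while some combination $\nu_1A_1+\nu_2A_2$ is invertible on $S_0$. The relations (P1) are not equivariant under general linear substitutions in $(A_1,A_2)$ --- this is precisely why the present paper has to pass to the augmented algebras $\Lambda'_n$ to obtain an ${\rm SO}(2,\mathbb{C})$ action --- so reducing to the invertible case requires an actual argument rather than a change of coordinates.
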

Let $\mbox{pr}\colon \operatorname{Rep}(\Lambda_n, \vec{v}_c, 1)^{\rm ss}_{\vartheta_c} \to \mathcal{M} (\Lambda_n,\vec{v}_c,1, \vartheta_c) $
be the quotient map. The proof {of Theorem~\ref{thm45} basically consists in proving that the counterimage
$\operatorname{pr}^{-1}(\mathcal{H} (n,c) )=:Z_n(c)$ coincides with the total space of the principal fibration $P^n(c)$ we introduced in Section~\ref{background}. As it is quite involved and requires a few intermediate Lemmas and Propositions, we refer the reader to~\cite{bblr} for further details.} Here we only note that the starting point is given by the stability conditions in Definition~\ref{def:stabil}.

\begin{Remark}The set of $(\vec{v}_c,w_c)$-dimensional representations of $\Lambda_n$ which are semistable according to Definition~\ref{def:stabil} does not change if we let the stability parameter vary inside the open cone
\[
 \Gamma_c=\left\{\vartheta=(\vartheta_0,\vartheta_1)\in\mathbb{R}^2\,|\,\vartheta_0>0,\, -\vartheta_0<\vartheta_1<-\frac{c-1}{c}\vartheta_0\right\} .
\]
It can be shown that
for any stability parameter $\bar{\vartheta}$ on the open rays
\begin{gather*}
 R_1 = \big\{(\vartheta_0,\vartheta_1)\in\mathbb{R}^2\,|\,\vartheta_0>0 ,\, \vartheta_0+\vartheta_1=0\big\} ,\\
 R_2 = \big\{(\vartheta_0,\vartheta_1)\in\mathbb{R}^2\,|\,\vartheta_0>0 ,\, (c-1)\vartheta_0+c\vartheta_1=0\big\}
\end{gather*}
there exist representations which are $\bar{\vartheta}$-semistable, but not $\vartheta_c$-semistable.
So, $ \Gamma_c$ is a chamber in the space
$\mathbb{R}^2_{(\vartheta_0,\vartheta_1)}$ of stability parameters and the closed rays $\overline{R_1}$, $\overline{R_2}$ are its walls. Furthermore, inside $\Gamma_c$ semistability and stability are equivalent (cf.~\cite[Lemma~4.7]{bblr}): in particular, points in $\mathcal{M} (\Lambda_n,\vec{v}_c,1, \vartheta_c)$ can be thought of as
$G_{\vec{v}_c}$-orbits of representations in $\operatorname{Rep}(\Lambda_n, \vec{v}_c, 1)$.

A full description of the chamber/wall decomposition of the space $\mathbb{R}^2_{(\vartheta_0,\vartheta_1)}$ will be the object of a future work.
\end{Remark}

We wish to prove that the component $\mathcal{H} (n,c) $ of $\mathcal{M}(\Lambda_n,\vec{v}_c, 1, \vartheta_c) $ introduced in Theorem~\ref{thm45} coincides with the whole moduli space $\mathcal{M}(\Lambda_n,\vec{v}_c, 1, \vartheta_c)$ (this will be Theorem~\ref{mainthm}). Let us introduce the following notation
\begin{equation*}%\label{def_Rnc}
\mathcal{R}(\Lambda_n,c)= \operatorname{Rep}(\Lambda_n,\vec{v}_c,1) ;\qquad \mathcal{R}^{\rm ss}(\Lambda_n,c) =\operatorname{Rep}(\Lambda_n,\vec{v}_c,1)_ {\vartheta_c}^{\rm ss} .
\end{equation*}
Given a representation $(A_1,A_2;C_1, \dots,C_n;e;f_1,\dots,f_{n-1}) \in \mathcal{R}(\Lambda_n,c)$, we form the pencil $A_1 + \lambda A_2$, with $\lambda\in \mathbb{C}$. We recall that a pencil $A_1 + \lambda A_2$ is {\it regular}
if there is a point $[\nu_1,\nu_2]\in \mathbb{P}^1$ such that $\det (\nu_1 A_1 + \nu_2 A_2) \neq 0$.

\begin{figure}[t]\centering
$ \xymatrix@R-2.3em{
&&\mbox{\scriptsize$0$}&&&&\mbox{\scriptsize$1$}\\
&&\bullet\ar@/_3ex/[llddddddd]_{j}\ar@/^/[rrrr]^{a_{1}}\ar@/^4ex/[rrrr]^{a_{2}}&&&&\bullet\ar@/^/[llll]^{b_{1}}
\ar@/^4ex/[llll]^{b_2}\ar@{..}@/^8ex/[llll]\ar@{..}@/^9ex/[llll]
\ar@/^10ex/[llll]^{b_{n-1}} \ar@/^16ex/[llll]^{d_{2}} \ar@/^19ex/[llll]^{d_{3}} \ar@{..}@/^23ex/[llll] \ar@{..}@/^25ex/[llll] \ar@/^28ex/[llll]^{d_{n}} \\ \\ \\ \\ \\ \\ \\
\bullet\ar@/_/[rruuuuuuu]^{i_1}\ar@/_3ex/[rruuuuuuu]_{i_2}\ar@{..}@/_6ex/[rruuuuuuu]\ar@{..}@/_8ex/[rruuuuuuu]\ar@/_10ex/[rruuuuuuu]_{i_{n-1}}&&&&&\\
\mbox{\scriptsize$\infty$}&&&&
}
$
\caption{The quivers $Q'_n$ for $n\ge 3$.}\label{Q'n}
\end{figure}
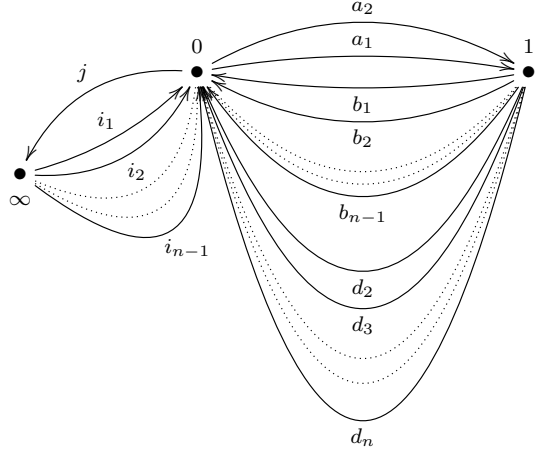

To prove Theorem \ref{mainthm} it is convenient to introduce ``augmented'' framed quivers defined as follows:
let $Q'_{2}= Q_2$, and, for every $n\geq 3$, let $Q'_{n}$ be the framed quiver in Fig.~\ref{Q'n}. Let $J'_{2}= J_2$ and, for all $n\geq 3$, let $J'_{n}$ be the two sided ideal of $\mathbb{C} Q'_{n}$ generated by the relations
\begin{equation}
\begin{cases}
a_2d_{q+1}-a_1b_q =0, \\
d_{q+1}a_2-b_qa_1-i_qj =0
\end{cases}\qquad\text{for}\quad q=1,\dots,n-1 .
\label{eq-gen-J'}
\end{equation}
We set $\Lambda'_{n}=\mathbb{C} Q'_{n}/J'_{n}$ for all $n\geq2$. Notice that $\Lambda'_{2}=\Lambda_{2}$;
for $n\geq 3$, the algebra $\Lambda_{n}$ can be obtained by taking the quotient of $\Lambda'_{n}$ by a suitable ideal.
Indeed, let $K_{n}$ be the two sided ideal of $\Lambda'_{n}$ generated by the relations
\begin{equation}
\bar{b}_{q}=\bar{d}_q\qquad\text{for}\quad q=2,\dots, n-1 ,\label{eq-gen-K_n}
\end{equation}
where $\bar{x}$ is the class in $\Lambda'_{n}$ of the element $x \in \mathbb{C} Q'_{n}$.
Let $\tilde{p}_{n}\colon\mathbb{C} Q'_{n}\longrightarrow\mathbb{C} Q_{n}$ be the $\mathbb{C}$-algebra morphism determined by the assignments
\begin{equation}\label{eq_tp_n}
\tilde{p}_{n}(a_q) = a_q , \qquad \tilde{p}_{n}(b_q) = c_q ,\qquad \tilde{p}_{n}(d_q) = c_{q} ,\qquad
\tilde{p}_{n}(j) = j , \qquad \tilde{p}_{n}(i_q) = i_q .
\end{equation}
It is straightforward that $\tilde{p}_{n}$ is surjective and that its kernel is the two sided ideal $L_{n}\subset \mathbb{C} Q'_{n}$ generated by the relations
\begin{equation}
b_{q}=d_{q}\qquad\text{for}\quad q=2,\dots,n-1 .
\label{eq-gen-L_n}
\end{equation}
It follows directly from equation~\eqref{eq_tp_n} that $\tilde{p}_n$ maps the set of generators of $J'_n$ (see equation~\eqref{eq-gen-J'}) onto the set of generators of $J_n$ (see equation~\eqref{eq-gen-J}), so that
\begin{equation*}
\tilde{p}_{n}(J'_{n}) =J_{n} .
\end{equation*}
Then it is not hard to check that $\tilde{p}_{n}$ induces a surjective morphism $p_n\colon \Lambda'_n \to \Lambda_n$,
whose kernel, by equations~\eqref{eq-gen-K_n} and \eqref{eq-gen-L_n}, is
\begin{equation*}
\ker p_{n}= L_{n}/(L_{n}\cap J'_{n})=K_{n} .
\end{equation*}
In conclusion, we have proved the following lemma.
\begin{Lemma}\label{lm-Lambda'-->>Lambda}
There is an isomorphism of $\mathbb{C}$-algebras $\Lambda'_{n}/K_{n}\simeq \Lambda_{n}$.
\end{Lemma}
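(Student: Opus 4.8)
The plan is to read the statement off the first isomorphism theorem for $\mathbb{C}$-algebras applied to the surjection $p_n$. The construction preceding the statement already assembles every ingredient: $\tilde{p}_n\colon \mathbb{C} Q'_n \to \mathbb{C} Q_n$ is a surjective algebra morphism with $\ker\tilde{p}_n = L_n$, it satisfies $\tilde{p}_n(J'_n)=J_n$, and therefore it descends to a surjective morphism $p_n\colon \Lambda'_n \to \Lambda_n$. Because a surjective morphism of $\mathbb{C}$-algebras yields $\Lambda'_n/\ker p_n \simeq \operatorname{im} p_n = \Lambda_n$, the entire lemma reduces to the single identity $\ker p_n = K_n$.

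To identify the kernel I would note that $\bar{x}\in\Lambda'_n$ lies in $\ker p_n$ precisely when $\tilde{p}_n(x)\in J_n$, so $\ker p_n = \tilde{p}_n^{-1}(J_n)/J'_n$. One checks $\tilde{p}_n^{-1}(J_n) = J'_n + L_n$: the inclusion $\supseteq$ is immediate from $\tilde{p}_n(J'_n)=J_n$ and $\tilde{p}_n(L_n)=0$, while for $\subseteq$ one writes $\tilde{p}_n(x)=\tilde{p}_n(y)$ with $y\in J'_n$ and deduces $x-y\in\ker\tilde{p}_n=L_n$. Hence $\ker p_n = (J'_n+L_n)/J'_n$, which is exactly the image of the ideal $L_n$ under the projection $\mathbb{C} Q'_n\to\Lambda'_n$. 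Since $L_n$ is generated by the $b_q-d_q$ of \eqref{eq-gen-L_n}, this image is generated by the classes $\bar{b}_q-\bar{d}_q$, so it coincides with $K_n$ as defined in \eqref{eq-gen-K_n}; by the second isomorphism theorem it may equally be written $L_n/(L_n\cap J'_n)$. Either way $\ker p_n = K_n$, and the first isomorphism theorem closes the argument.

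The one point calling for real verification will be the identity $\tilde{p}_n(J'_n)=J_n$, on which the existence of $p_n$ rests; everything else is formal. This is a finite check on generators: under $b_q\mapsto c_q$ and $d_{q+1}\mapsto c_{q+1}$, the first generating relation of $J'_n$ in \eqref{eq-gen-J'} maps to $a_2 c_{q+1}-a_1 c_q$ and the second to $c_{q+1}a_2 - c_q a_1 - i_q j$, which are precisely the two families generating $J_n$ in \eqref{eq-gen-J}. As $\tilde{p}_n$ thereby carries the generators of $J'_n$ onto those of $J_n$, the equality of the two-sided ideals they generate follows, and no genuine obstacle remains beyond this bookkeeping.
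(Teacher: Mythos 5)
Your proposal is correct and follows essentially the same route as the paper: the paper's ``proof'' is exactly the construction preceding the lemma, namely that $\tilde{p}_n$ carries the generators of $J'_n$ onto those of $J_n$, hence descends to a surjection $p_n\colon \Lambda'_n\to\Lambda_n$ with $\ker p_n = L_n/(L_n\cap J'_n)=K_n$, and the first isomorphism theorem concludes. Your extra care in verifying $\tilde{p}_n^{-1}(J_n)=J'_n+L_n$ only makes explicit what the paper dismisses as ``not hard to check.''
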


One of the reasons to introduce the augmented quivers $Q'_{n}$ is that their path algebras carry
an action of the group ${\rm SO}(2, \mathbb{C})$ which descends to the quotient algebra $\Lambda'_{n}$. This
action will be instrumental in proving the regularity of the pencil $A_1 + \lambda A_2$.

Elements of ${\rm SO}(2, \mathbb{C})$ will be denoted by $\nu= \left(\begin{smallmatrix} \nu_1 & \nu_2 \\ -\nu_2 & \nu_1\end{smallmatrix}\right)$. Given arrows
\[(a_{1},a_{2};b_{1},\dots,b_{n-1};d_{2},\dots,d_{n};j;i_{1},\dots,i_{n-1})\]
as above and $\nu\in {\rm SO}(2,\mathbb{C})$, we set
\begin{equation*}
\begin{pmatrix}a'_{1} \\ a'_{2} \end{pmatrix} = \nu \begin{pmatrix}a_{1} \\ a_{2} \end{pmatrix},
\qquad
\begin{pmatrix}b'_{q} \\ d'_{q+1} \end{pmatrix} = \nu^{-1}\begin{pmatrix}b_{q} \\ d_{q+1} \end{pmatrix}
\qquad\text{for}\quad q=1,\dots,n-1 .
%\label{eq-a'-->a}
\end{equation*}
The assignment
\begin{gather*}
(a_{1},a_{2};b_{1},\dots,b_{n-1};d_{2},\dots,d_{n};j;i_{1},\dots,i_{n-1}) \\ \qquad{} \longmapsto(a'_{1},a'_{2};b'_{1},\dots,b'_{n-1};d'_{2},\dots,d'_{n};j;i_{1},\dots,i_{n-1}) ,
\end{gather*}
induces an action
\[\widetilde\Phi_n \colon \ {\rm SO}(2,\mathbb{C}) \to \operatorname{Aut}_{\mathbb{C}\hbox{-alg}} (\mathbb{C} Q'_{n}) ,\]
which leaves invariant the generators of the ideal $J'_{n}$, that is,
\begin{equation*}
\widetilde{\Phi}_{n} (\nu) \bigl(J'_{n}\bigr) =J'_{n} .
\end{equation*}
So one has an induced action
\[\Phi_n \colon \ {\rm SO}(2,\mathbb{C}) \to \operatorname{Aut}_{\mathbb{C}\hbox{-alg}} (\Lambda'_{n}) .\]

We wish now to study the space $\operatorname{Rep}(\Lambda'_n, \vec{v}_c, 1) = \mathcal{R}(\Lambda'_n, c)$
of $(c,c)$-dimensional framed representations of $\Lambda'_{n}$ and its open subset
 $\operatorname{Rep}(\Lambda'_n, \vec{v}_c, 1)_{\vartheta_c}^{\rm ss} = \mathcal{R}^{\rm ss}(\Lambda'_n, c)$ of $\vartheta_c$-semistable representations
 (defined analogously to Definition~\ref{def:stabil}).
 For $n=2$ there is nothing new, since
 $\mathcal{R}(\Lambda'_2, c)=\mathcal{R}(\Lambda_2, c)$ and $\mathcal{R}^{\rm ss}(\Lambda'_2, c) = \mathcal{R}^{\rm ss}(\Lambda_2, c)$.
 For $n\geq3$, $\mathcal{R}(\Lambda'_n, c)$ is the affine subvariety of the vector space
\begin{equation*}
\operatorname{Hom}_\mathbb{C}(V_0,V_1)^{\oplus 2}\oplus\operatorname{Hom}_\mathbb{C}(V_1,V_0)^{\oplus 2n-2}\oplus\operatorname{Hom}_\mathbb{C}(V_0,W)\oplus\operatorname{Hom}_\mathbb{C}(W,V_0)^{\oplus n-1}
%\label{eq-point}
\end{equation*}
whose points $(A_1,A_2;B_1,\dots,B_{n-1};D_{2},\dots,D_{n},e;f_1,\dots,f_{n-1})$ satisfy the relations
determined by equations~\eqref{eq-gen-J'}, namely,
\begin{equation}
\begin{cases}
A_2D_{q+1}=A_1B_q,\\
D_{q+1}A_2=B_qA_1+f_{q}e
\end{cases}\qquad\text{for}\quad q=1,\dots,n-1 .
\label{eq-(Q1')}\tag{\rm Q$1'$}
\end{equation}

\begin{Lemma}\label{Q-Lemma}
$\mathcal{R}^{\rm ss}(\Lambda'_{n}, c)$ is the open subset of $\mathcal{R}(\Lambda'_{n}, c)$ determined by the conditions:
\begin{enumerate}\itemsep=0pt
 \item[\rm{(Q$2'$)}]
 for all subrepresentations $S=(S_0,S_1)$ such that $S_0\subseteq\ker e$, one has $\dim S_0\leq\dim S_1$, and, if $\dim S_0=\dim S_1$, then $S=0$;
 \item[\rm{(Q$3'$)}]
 for all subrepresentations $S=(S_0,S_1)$ such that $S_0\supseteq\operatorname{Im} f_i$, for $i=1,\dots,n-1$, one has $\dim S_0\leq\dim S_1$.
\end{enumerate}
\end{Lemma}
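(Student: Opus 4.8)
The plan is to show that the two semistability inequalities of Definition~\ref{def:stabil}, evaluated at the specific data $\vec{v}_c=(c,c)$, $w_c=1$, $\vartheta_c=(2c,1-2c)$, are equivalent term by term to the numerical conditions (Q$2'$) and (Q$3'$). Since the stability notion for $\Lambda'_n$ is defined in exact analogy with Definition~\ref{def:stabil}, I may work with an arbitrary subrepresentation $S=(S_0,S_1)$ --- a pair of subspaces stable under all the arrows of $Q'_n$ --- and reduce everything to a case analysis in the integers $s_0=\dim S_0$ and $s_1=\dim S_1$, which satisfy $0\le s_0,s_1\le c$. The only quantities that enter are the pairings
\[
\vartheta_c\cdot(s_0,s_1)=2c\,s_0+(1-2c)s_1=2c(s_0-s_1)+s_1,\qquad \vartheta_c\cdot(v_0,v_1)=c.
\]

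First I would treat \eqref{eq:stabnak1}, the case $S_0\subseteq\ker e$, where the requirement is $2c(s_0-s_1)+s_1\le 0$. If $s_0>s_1$ then $s_0-s_1\ge 1$ and the left-hand side is at least $2c>0$, so the inequality fails; hence $s_0\le s_1$ is forced. If $s_0=s_1=:s$ the left-hand side equals $s$, which is $\le 0$ exactly when $s=0$, that is $S=0$; this is precisely the clause ``if $\dim S_0=\dim S_1$ then $S=0$''. If $s_0<s_1$ the left-hand side is at most $-2c+s_1\le -c<0$ and the inequality holds automatically. Thus \eqref{eq:stabnak1} is equivalent to (Q$2'$).

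Next I would treat \eqref{eq:stabnak2}, the case $S_0\supseteq\operatorname{Im} f_i$ for all $i$, where the requirement is $2c(s_0-s_1)+s_1\le c$. If $s_0>s_1$ the left-hand side is again at least $2c>c$ and the inequality fails, while if $s_0\le s_1$ it is at most $s_1\le c$, with equality only at $s_0=s_1=c$, i.e.\ at $S=(V_0,V_1)$, where equality is allowed. Hence \eqref{eq:stabnak2} is equivalent to $s_0\le s_1$, which is exactly (Q$3'$). Combining the two equivalences identifies $\mathcal{R}^{\rm ss}(\Lambda'_n,c)$ with the locus cut out by (Q$2'$) and (Q$3'$); that this locus is open in the affine variety $\mathcal{R}(\Lambda'_n,c)$ is the standard fact that each semistability condition is an open condition.

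I do not expect a genuine obstacle: the statement is a direct translation of Definition~\ref{def:stabil}. The one point deserving care is the choice of $\vartheta_c$ in the interior of the chamber $\Gamma_c$. It is exactly this that makes the borderline case $s_0=s_1>0$ of \eqref{eq:stabnak1} strictly violate the inequality, so that mere semistability already enforces the stable-type clause $S=0$ without invoking stability separately; equivalently, no subrepresentation is properly semistable for $\vartheta_c$. I would therefore verify each equality case --- $s_0=s_1=0$, $s_0=s_1=c$, and $s_0=s_1$ with $0<s_0<c$ --- to confirm that the pairing lands strictly on the required side except at $S=0$ and $S=(V_0,V_1)$, matching the asymmetry between (Q$2'$) and (Q$3'$).
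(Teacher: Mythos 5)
Your proof is correct and takes essentially the same route as the paper's: substitute $\vec{v}_c$, $w_c$, $\vartheta_c$ into the two inequalities of Definition~\ref{def:stabil} and resolve the resulting integer inequalities, the only difference being that you case-split on the sign of $s_0-s_1$ while the paper divides by $2c$ and bounds the fractional terms $\tfrac{s_1}{2c}$ and $\tfrac12-\tfrac{s_1}{2c}$. No issues.
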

\begin{proof}
Given a subrepresentation $(S_{0},S_{1})$, we set $s_{i}=\dim S_{i}$, $i=0,1$. By substituting the definitions of $\vec{v}_c$ and $\vartheta_c$ given in Theorem \ref{thm45} into equations~\eqref{eq:stabnak1} and \eqref{eq:stabnak2} one gets
\begin{gather}
 \text{if $S_0\subseteq \ker e$, then $s_{0}\leq s_1-\dfrac{s_1}{2c}$} ;\label{eq:stabnak1'}\\
\text{if $S_0\supseteq \operatorname{Im} f_i$ for $i=1,\dots,n-1$, then $s_{0}\leq s_{1}+\dfrac{1}{2}-\dfrac{s_{1}}{2c}$} .\label{eq:stabnak2'}
\end{gather}
Whenever $s_1>0$, one has $0<\frac{s_1}{2c}<1$; hence, equation~\eqref{eq:stabnak1'} is equivalent to condition \rm{(Q$2'$)}. On the other hand, as $0\leq\frac{1}{2}-\frac{s_{1}}{2c}<\frac{1}{2}$, equation~\eqref{eq:stabnak2'} is equivalent to condition \rm{(Q$3'$)}.
\end{proof}

\begin{Proposition}\label{prop-regular-in-R'}
For each point of $\mathcal{R}^{\rm ss}(\Lambda'_{n}, c)$ the associated matrix pencil $A_{1}+\lambda A_{2}$ is regular.
\end{Proposition}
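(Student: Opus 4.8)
The plan is to argue by contradiction, exploiting the ${\rm SO}(2,\mathbb{C})$-action $\Phi_n$ to reduce regularity to a statement that can be contradicted by the semistability conditions (Q$2'$), (Q$3'$). First I would reformulate the conclusion: the pencil $A_1+\lambda A_2$ is regular precisely when the binary form $P(\nu_1,\nu_2)=\det(\nu_1A_1+\nu_2A_2)$, which is homogeneous of degree $c$, does not vanish identically. It is convenient to pass to the weight vectors $A_\pm=A_1\pm iA_2$ of the action: a direct computation shows that under $\nu=\left(\begin{smallmatrix}\nu_1&\nu_2\\-\nu_2&\nu_1\end{smallmatrix}\right)$ one has $A'_\pm=(\nu_1\mp i\nu_2)A_\pm$, so the subspaces $\ker A_+$ and $\ker A_-$ are unchanged along an ${\rm SO}(2,\mathbb{C})$-orbit, and the combinations $\nu_1A_1+\nu_2A_2$ already sweep out every point of $\mathbb{P}^1$ except $[1,\pm i]$. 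Hence a nonzero form $P$ cannot vanish on all of them, and assuming the pencil is \emph{not} regular forces $P\equiv0$, so that \emph{every} matrix in the pencil (in particular $A_1$, $A_2$ and $A_\pm$) is singular; this is the hypothesis I would carry to a contradiction.

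The second preliminary step is to record that $\mathcal{R}^{\rm ss}(\Lambda'_n,c)$ is stable under $\Phi_n$. Since $\Phi_n(\nu)$ is an algebra automorphism fixing the framing arrows $j$ and $i_q$ (hence $e$ and the $f_q$) and replacing $A_1,A_2,B_q,D_{q+1}$ by invertible linear combinations of themselves, a pair $(S_0,S_1)$ is a subrepresentation of the original point if and only if it is one of the transformed point, while the conditions $S_0\subseteq\ker e$ and $S_0\supseteq\operatorname{Im}f_i$ are literally unchanged. By Lemma~\ref{Q-Lemma} this shows that semistability is ${\rm SO}(2,\mathbb{C})$-invariant, so I am free to replace a given semistable point by any point of its orbit while producing the destabilizing subrepresentation.

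For the core of the argument I would produce, from the singular pencil, a subrepresentation $(S_0,S_1)$ with $\dim S_0>\dim S_1$ forbidden by (Q$2'$) or (Q$3'$). Since $\det(\nu_1A_1+\nu_2A_2)\equiv0$, Kronecker's theory of singular matrix pencils yields a nonzero polynomial solution $x(\lambda)=x_0+\lambda x_1+\cdots+\lambda^\epsilon x_\epsilon$ of $(A_1+\lambda A_2)x(\lambda)=0$ of minimal degree $\epsilon$, whose coefficients $x_0,\dots,x_\epsilon$ are linearly independent and satisfy $A_1x_0=0$, $A_1x_k=-A_2x_{k-1}$ for $1\le k\le\epsilon$, and $A_2x_\epsilon=0$. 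Setting $S_0=\langle x_0,\dots,x_\epsilon\rangle$ one checks $A_1S_0=A_2S_0=:S_1=\langle A_2x_0,\dots,A_2x_{\epsilon-1}\rangle$, so that $\dim S_0=\epsilon+1>\epsilon\ge\dim S_1$ and $(S_0,S_1)$ is closed under $A_1$ and $A_2$. It then remains to enlarge $(S_0,S_1)$ to a \emph{genuine} subrepresentation, i.e.\ to make it closed under the backward arrows $B_q,D_{q+1}$ as well, controlling the growth of the dimensions by means of the relations (Q$1'$); and, using that $\dim W=1$ (so $\ker e$ has codimension at most one) together with the ${\rm SO}(2,\mathbb{C})$-freedom of the previous step, to arrange that the resulting subrepresentation either lies in $\ker e$ or contains every $\operatorname{Im}f_i$. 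In either case (Q$2'$) or (Q$3'$) is violated, which is the desired contradiction.

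The main obstacle is exactly this last point. The subspaces built from the Kronecker chain are closed only under the forward maps $A_1,A_2$, and forcing closure under $B_q$ and $D_{q+1}$ without destroying the strict inequality $\dim S_0>\dim S_1$ is delicate. I expect the relations $A_2D_{q+1}=A_1B_q$ and $D_{q+1}A_2=B_qA_1+f_qe$ to be the essential tool: the first propagates $S_1$ back into $S_0$, while the second is what ties the construction to the framing maps $e$ and $f_q$ and thereby decides which of the two stability conditions is the operative one. The role of the augmented quiver and of ${\rm SO}(2,\mathbb{C})$ is precisely to supply enough room, through the extra arrows $b_q,d_q$ and the rotational freedom, to perform this closure and placement, and this is the step I would expect to require the most care.
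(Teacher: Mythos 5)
Your setup coincides with the paper's: argue by contradiction, take a minimal-degree Kronecker polynomial solution $v(\lambda)=\sum_\alpha(-\lambda)^\alpha v_\alpha$ of the singular pencil, observe that the chain relations give $A_1v_0=0$, $A_1v_\alpha=A_2v_{\alpha-1}$, $A_2v_\varepsilon=0$, note that semistability is ${\rm SO}(2,\mathbb{C})$-invariant, and aim to produce a subrepresentation violating (Q$2'$) or (Q$3'$). But the proof is not complete: the step you yourself flag as ``the main obstacle'' --- closing $(S_0,S_1)$ under the backward arrows $B_q$, $D_{q+1}$ while keeping $\dim S_0>\dim S_1$ --- is exactly the substance of the paper's argument, and you only describe it as a plan. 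What is missing is the explicit saturation $U_0=\langle v_0,\dots,v_\varepsilon\rangle$, $U_{2k+1}=A_1(U_{2k})+A_2(U_{2k})$, $U_{2k}=\sum_q B_q(U_{2k-1})+\sum_q D_q(U_{2k-1})$, together with the case split on whether $U_0\subseteq\ker e$, and neither case is routine.

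In the case $U_0\subseteq\ker e$ the key point is not dimension bookkeeping but \emph{minimality of $\varepsilon$}: the relations (Q$1'$) show that the vectors $w_{q,\alpha}=D_qA_1v_\alpha$ assemble into a polynomial solution of the same pencil of degree $\varepsilon-1$, so they must all vanish; hence $U_2=0$, $(S_0,S_1)=(U_0,U_1)$ with $S_1=A_1(U_0)$ and $\ker A_1\cap U_0\neq0$, contradicting (Q$2'$). In the case $U_0\not\subseteq\ker e$ the ${\rm SO}(2,\mathbb{C})$-action is used not merely to preserve regularity but to rotate the coefficients so that $e(v_0)=1$; one then proves $\operatorname{Im}f_q\subseteq U_2$ and $A_2(\operatorname{Im}f_q)\subseteq A_1(U_2)$, and runs an induction establishing $U_{2k+1}\subseteq\sum_{l=1}^{k}A_1(U_{2l})$, the inductive step resting on $A_2D_qA_1(U_{2l})\subseteq A_1(U_{2(l+1)})$ from (Q$1'$). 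This yields $S_1=A_1(S_0)$, hence $\dim S_1<\dim S_0$, contradicting (Q$3'$). Without these two arguments the proposal establishes only that a destabilizing pair \emph{ought} to exist, not that it does; so as written there is a genuine gap, even though the route you sketch is the right one.
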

\begin{proof}
Let $(A_1,A_2;B_1,\dots,B_{n-1};D_{2},\dots,D_{n},e;f_1,\dots,f_{n-1})$ be a point of $\mathcal{R}^{\rm ss}(\Lambda'_{n}, c)$,
and assume that $A_{1}+\lambda A_{2}$ is singular.
If $c=1$, then $A_{1}+\lambda A_{2}$ is singular if and only if $A_{1}=A_{2}=0$. But this implies the subrepresentation
$(V_0,0)$ does not satisfy condition (Q$3'$). Hence we can assume $c\geq 2$.
The fact that the pencil $A_{1}+\lambda A_{2}$ is singular implies that
there is a nontrivial element
\begin{equation}
v(\lambda)=\sum_{\alpha=0}^{\varepsilon}(-\lambda)^{\alpha}v_{\alpha} \in V_0\otimes_{\mathbb{C}}\mathbb{C}[\lambda]
\label{eq-v(l)}
\end{equation}
such that
\begin{equation}
(A_{1}+\lambda A_{2})v(\lambda)=0\qquad\text{for all $\lambda\in\mathbb{C}$.}
\label{eq-Av(l)}
\end{equation}
By arguing as in the proof of \cite[Lemma~4.11]{bblr}, one can show that the minimal degree
polynomial solution $v(\lambda)$ for the pencil $A_{1}+\lambda A_{2}$ has necessarily degree $\varepsilon >0$.
Let us inductively define the vector spaces $\{U_{i}\}_{i\in \mathbb{N}}$ as follows:
\begin{equation*}
 \begin{cases}
 U_{0}=\langle v_{0},\dots,v_{\varepsilon}\rangle, & \\
 U_{2k+1}=A_{1}(U_{2k})+A_{2}(U_{2k}) &\text{for}\quad k\geq0,\\
 \displaystyle U_{2k}=\sum\limits_{q=1}^{n-1}B_{q}(U_{2k-1})+\sum\limits_{q=2}^{n}D_{q}(U_{2k-1}) &\text{for}\quad k\geq1 .
\end{cases}
\end{equation*}
Note that each $U_{j}$, with $j$ even, is a subspace of $V_0$, while each $U_{j}$, with~$j$ odd, is a subspace of~$V_1$. So, if we introduce the subspaces
\begin{equation*}
S_{0}=\sum_{k=0}^{\infty}U_{2k} \subset V_0 ,\qquad S_{1}=\sum_{k=0}^{\infty}U_{2k+1}\subset V_1 ,
\end{equation*}
it follows that $(S_{0},S_{1})$ is a subrepresentation of $(V_0, V_1)$. We will show that this subrepresentation fails to satisfy either condition~(Q$2'$) or condition~(Q$3'$) of Lemma~\ref{Q-Lemma}, so that one gets a~contradiction.

By substituting equation~\eqref{eq-v(l)} into equation~\eqref{eq-Av(l)} one finds out that
\begin{equation}
 \begin{cases}
A_{1}v_{0} =0,\\
A_{1}v_{\alpha} =A_{2}v_{\alpha-1},\qquad \alpha=1,\dots,\varepsilon,\\
A_{2}v_{\varepsilon} =0,
\end{cases}
\label{eq-A_1v=A_2v}
\end{equation}
so that
\begin{equation}
U_{1}=\langle A_{1}v_{1},\dots,A_{1}v_{\varepsilon} \rangle=A_{1}(U_{0}) .
\label{eq-V_1}
\end{equation}

There are two possible cases, either i) $U_{0}\subseteq\ker e$, or ii) $U_{0}\not\subseteq\ker e$.

i) If we suppose that
$U_{0}\subseteq\ker e$,
equation~\eqref{eq-A_1v=A_2v} and condition \eqref{eq-(Q1')} imply that
\begin{equation*}
U_{2}=\sum_{q=2}^{n}D_{q}A_{1}(U_{0}) .
%\label{eq-V_2-from-V_0}
\end{equation*}
By letting $w_{q,\alpha}=D_{q}A_{1}v_{\alpha}$, $\alpha=1,\dots,\varepsilon$,
for each $q=2,\dots,n$ we obtain an
 element \[(w_{q,1},\dots,w_{q,\varepsilon})\in U_2^{\oplus\varepsilon} \]
 such that $\sum\limits_{\alpha=0}^{\varepsilon-1}(-\lambda)^{\alpha}w_{q,\alpha+1}$ is a polynomial solution for the pencil $A_{1}+\lambda A_{2}$ of degree $\varepsilon - 1$. Since we have supposed $\varepsilon$ to be minimal, one has
 $(w_{q,1},\dots,w_{q,\varepsilon}) = 0$. From that it is easy to deduce that $U_2 =0$ and that $(S_0, S_1) = (U_0,U_1)$.
 So, since $\ker A_1 \cap U_0 \neq 0$ by equation~\eqref{eq-A_1v=A_2v}, then
 equation~\eqref{eq-V_1} entails that $(S_{0},S_{1})$ is a subrepresentation violating condition (Q$2'$).

 ii) Suppose now that $U_{0}$ is not contained in $\ker e$. So, there is at least one $\gamma\in \{0,\dots, \varepsilon\}$ such that
 $e(v_\gamma) \neq 0$. Condition \eqref{eq-(Q1')} implies that
 \begin{equation}
\operatorname{Im} f_{q}= \langle f_{q}e(v_{\gamma})\rangle \subseteq U_{2}\qquad\text{for all}\quad q=1,\dots,n-1 .
\label{eq-imf-in-V}
\end{equation}
To simplify computations, we may assume $\gamma =0$ and $e(v_0)=1$.
 Actually, one checks that the ${\rm SO}(2,\mathbb{C})$ action on $\Lambda'_n$ induces an action on $\mathcal{R}(\Lambda'_{n}, c)$, which commutes with the $G_{\vec{v}_c}$ action defined on the same space, and therefore it restricts to an ${\rm SO}(2,\mathbb{C})$ action on $\mathcal{R}^{\rm ss}(\Lambda'_{n}, c)$. Moreover, this action preserves the regularity of the matrix pencil
 $A_1 + \lambda A_2$. An element $\nu= \left(\begin{smallmatrix} \nu_1 & \nu_2 \\ -\nu_2 & \nu_1\end{smallmatrix}\right)\in {\rm SO}(2,\mathbb{C})$ produces a change of basis
 \[(v_0, \dots, v_{\varepsilon}) \mapsto \nu \cdot (v_0, \dots, v_{\varepsilon}) = (v'_0, \dots, v'_{\varepsilon}) ,\]
 so that
\begin{equation*}
e(v'_{0})=\sum_{\alpha=0}^{\varepsilon}(-\nu_{2})^{\alpha}\nu_{1}^{\varepsilon-\alpha}e(v_{\alpha}) .
\end{equation*}
Since $(e(v_{0}),\dots,e(v_{\varepsilon}))\neq(0,\dots,0)$, there is $\nu\in {\rm SO}(2, \mathbb{C})$ so that $e(v'_{0})\neq0$. Moreover, $e(v'_0)$ can be assumed to be $1$.

Next, by using condition \eqref{eq-(Q1')} and equation~\eqref{eq-A_1v=A_2v}, along with the identity
$A_{2}(\operatorname{Im} f_{q}) =\langle A_{2}f_{q}(1)\rangle$ $ = \langle A_{2}f_{q}(e(v_0))\rangle$, it is not hard to show that
\begin{equation}
A_{2}(\operatorname{Im} f_{q})\subseteq A_{1}(U_{2})\qquad\text{for all}\quad q=1,\dots,n-1 .
\label{eq-A_2(im-f)}
\end{equation}

Now we show that
\begin{equation}
U_{2k+1}\subseteq \sum_{l=1}^{k}A_{1}(U_{2l})
\label{eq-V_(2k+1)-in-imA_{1}}
\end{equation}
for all $k\geq 1$.
Assume $k=1$. By using equations~\eqref{eq-A_1v=A_2v}, \eqref{eq-(Q1')} and condition $e(v_{0})=1$, one gets
\begin{equation*}
f_{q}e(v_{\alpha})=e(v_{\alpha})D_{q+1}A_{1}v_{1}
\end{equation*}
for $q=1,\dots,n-1$. Hence, by using equations~\eqref{eq-A_1v=A_2v} and \eqref{eq-(Q1')} again one shows that
\begin{equation*}
U_{2}= \sum_{q=2}^{n}D_{q}A_{1}(U_{0}) .
%\label{eq-V_2-from-V_0'}
\end{equation*}
 Then $U_{3}$ is spanned by the sets of vectors
\begin{equation*}
\{A_{1}D_{q}A_{1}v_{\alpha}\}_{\begin{subarray}{l} q=2,\dots,n \\ \alpha=1,\dots,\varepsilon \end{subarray}}\subseteq A_{1}(U_{2}) ,\qquad
\{A_{2}D_{q}A_{1}v_{\alpha}\}_{\begin{subarray}{l} q=2,\dots,n \\ \alpha=1,\dots,\varepsilon \end{subarray}}\subseteq A_{2}(U_{2})
%\label{eq-gen-for-V_3}
\end{equation*}
and it follows directly from equations~\eqref{eq-(Q1')} that
$A_{2}D_{q}A_{1}v_{\alpha} \in A_{1}(U_{2})$, for $q=2,\dots,n$. So $U_3\subseteq A_1(U_2)$.

Let us now suppose that equation~\eqref{eq-V_(2k+1)-in-imA_{1}} holds true for $1\leq k \leq m$, with $m\geq 1$. This means that $U_{2m+1}$ is spanned by vectors of the form
$A_{1}w$ with $w\in U_{2l}$, $l=1,\dots,m$.
By noticing that $U_{2m+2}$ is spanned by vectors of the form $B_{p}A_{1}w$ and $D_{q}A_{1}w'$, with $w\in U_{2l}$ and $w'\in U_{2l'}$ for $l,l'=1,\dots,m$, and by using equation~\eqref{eq-(Q1')} and the inductive hypothesis one
finds out that
\begin{equation*}
U_{2m+2}\subseteq \sum_{l=1}^{m} \sum_{q=2}^{n}D_{q}A_{1}(U_{2l})+\sum_{q=1}^{n-1}\operatorname{Im} f_{q} .
%\label{eq-V_(2m+2)-in-imC_qA_{1}+imf_q}
\end{equation*}
From this it follows
\begin{align}
A_1(U_{2m+2}) + A_2(U_{2m+2}) &= U_{2m+3} \nonumber\\
& \subseteq A_{1}(U_{2(m+1)})+\sum_{l=1}^{m} \sum_{q=2}^{n}A_{2}D_{q}A_{1}(U_{2l}) +\sum_{q=1}^{n-1}A_{2}(\operatorname{Im} f_{q}) \nonumber\\
& \subseteq A_{1}(U_{2(m+1)})+A_{1}(U_{2})+\sum_{l=1}^{m} \sum_{q=2}^{n}A_{2}D_{q}A_{1}(U_{2l}) ,\label{last-step}
\end{align}
where in the last step equation~\eqref{eq-A_2(im-f)} has been used.
For $q=2,\dots, n$, equation~\eqref{eq-(Q1')} implies that
\begin{equation*}
A_{2}D_{q}A_{1}(U_{2l}) \subseteq A_{1}(U_{2(l+1)}) .
\end{equation*}
Thus, from equation~\eqref{last-step} we may conclude that
\begin{equation*}
U_{2(m+1)+1}\subseteq\sum_{l=1}^{m+1}A_{1}(U_{2l}) ,
\end{equation*}
so that the inclusion \eqref{eq-V_(2k+1)-in-imA_{1}} is proved.

This and equation~\eqref{eq-V_1} imply that
\begin{equation*}
S_{1}=\sum_{k=0}^{\infty}U_{2k+1}\subseteq A_{1}(U_{0})+\sum_{k=1}^{\infty}\sum_{l=1}^{k}A_{1}(U_{2l})=\sum_{k=0}^{\infty}A_{1}(U_{2k})= A_{1}\left(\sum_{k=0}^{\infty}U_{2k}\right)=A_{1}(S_{0}).
\end{equation*}
But $A_{1}(S_{0})\subseteq S_{1}$, so that
\begin{equation*}
S_{1}=A_{1}(S_{0}) .
\end{equation*}
By equation~\eqref{eq-A_1v=A_2v}, one has $\ker A_{1}\cap S_{0}\neq0$, and therefore $\dim S_{1}<\dim S_{0}$.
Finally, equation~\eqref{eq-imf-in-V} implies that the subrepresentation $(S_{0},S_{1})$ violates condition (Q$3'$).
 \end{proof}

When $n\geq 3$, there is a map $\mathcal{R}(\Lambda_{n}, c) \longrightarrow \mathcal{R}(\Lambda'_{n}, c)$ given by
\begin{equation*}
(A_1,A_2;C_1,\dots,C_n;e;f_1,\dots,f_{n-1}) \mapsto
(A_1,A_2;C_1,\dots,C_{n-1};C_{2},\dots,C_{n},e;f_1,\dots,f_{n-1}) .
\end{equation*}
This map provides a $G_{\vec{v}_{c}}$-equivariant isomorphism of $\mathcal{R}(\Lambda_{n}, c)$ onto the subvariety of $\mathcal{R}(\Lambda'_{n}, c)$ cut by the equations
\begin{equation*}
B_{q}=D_{q}\qquad\text{for}\quad q=2,\dots,n-1
%\label{eq-D_q=C_(q+1)}
\end{equation*}
(cf.\ equations~\eqref{eq-gen-K_n}). Through this isomorphism $\mathcal{R}(\Lambda_{n}, c)$ may be regarded as a closed subvariety of $\mathcal{R}(\Lambda'_{n}, c)$.
\begin{Lemma}\label{lm-R_n(c)-in-R'_n(c)}
When $n\geq 3$, one has that
\begin{equation*}
\mathcal{R}^{\rm ss}(\Lambda_{n}, c) =\mathcal{R}^{\rm ss}(\Lambda'_{n}, c) \cap \mathcal{R}(\Lambda_{n}, c) .
\end{equation*}
\end{Lemma}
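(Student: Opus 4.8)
The plan is to exploit the explicit embedding $\mathcal{R}(\Lambda_n, c) \hookrightarrow \mathcal{R}(\Lambda'_n, c)$ described just above, under which a representation $(A_1,A_2;C_1,\dots,C_n;e;f_1,\dots,f_{n-1})$ is sent to the tuple whose arrows $V_1 \to V_0$ are $B_q = C_q$ for $q=1,\dots,n-1$ and $D_q = C_q$ for $q=2,\dots,n$. The key observation is that, as a collection of linear maps $V_1 \to V_0$, one has on this image the set equality $\{B_1,\dots,B_{n-1},D_2,\dots,D_n\} = \{C_1,\dots,C_n\}$ (with $C_2,\dots,C_{n-1}$ each occurring twice), while the maps $A_1,A_2,e,f_1,\dots,f_{n-1}$ are literally unchanged.

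From this I would deduce that the notion of subrepresentation coincides for the two algebras on this locus. Indeed, a pair $(S_0,S_1)\subseteq(V_0,V_1)$ is a subrepresentation of the $\Lambda'_n$-representation precisely when it is stable under $A_1,A_2$ and under all of $B_1,\dots,B_{n-1},D_2,\dots,D_n$; by the set equality above this is exactly the requirement that $(S_0,S_1)$ be stable under $A_1,A_2$ and $C_1,\dots,C_n$, i.e.\ that it be a subrepresentation of the corresponding $\Lambda_n$-representation. Hence the two collections of subrepresentations are identical.

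Next I would invoke Definition~\ref{def:stabil}, applied to $\Lambda_n$ and to $\Lambda'_n$ respectively (with Lemma~\ref{Q-Lemma} providing the convenient dimension-inequality reformulation in the latter case): the semistability conditions \eqref{eq:stabnak1}--\eqref{eq:stabnak2} are phrased solely in terms of subrepresentations $(S_0,S_1)$, their dimensions, and the subspaces $\ker e$ and $\operatorname{Im} f_i$. Since both the collection of subrepresentations and the maps $e,f_1,\dots,f_{n-1}$ agree for the two structures, a point of $\mathcal{R}(\Lambda_n,c)\subseteq\mathcal{R}(\Lambda'_n,c)$ is $\vartheta_c$-semistable as a $\Lambda_n$-representation if and only if it is $\vartheta_c$-semistable as a $\Lambda'_n$-representation. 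This yields both inclusions of the asserted equality simultaneously.

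The argument is essentially formal, so I do not anticipate a serious obstacle; the one point deserving care is the verification that the two invariance conditions genuinely coincide, which rests entirely on the set equality $\{B_q,D_q\}=\{C_q\}$ on the image of the embedding. It is also worth recording that this image is exactly the subvariety cut out by $B_q=D_q$ for $q=2,\dots,n-1$, on which the relations~\eqref{eq-(Q1')} collapse to~\eqref{eq-gen-Q1}, so that the embedding is already well defined at the level of representation varieties, before semistability is brought into play.
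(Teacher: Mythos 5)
Your argument is correct and is essentially the paper's own proof: both reduce the claim to the observation that a pair $(S_0,S_1)$ is a subrepresentation for the $\Lambda_n$-structure if and only if it is one for the $\Lambda'_n$-structure, after which the semistability inequalities coincide verbatim. The only difference is presentational: the paper phrases this via the fact that $\Lambda_n$-\textbf{mod} is a full subcategory of $\Lambda'_n$-\textbf{mod} closed under subobjects (since $\Lambda_n$ is a quotient of $\Lambda'_n$), whereas you verify the same fact concretely through the set equality $\{B_1,\dots,B_{n-1},D_2,\dots,D_n\}=\{C_1,\dots,C_n\}$ on the image of the embedding.
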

\begin{proof}Semistability is a numerical condition which is to be checked on the set of all submodules of a given representation. Hence, it is enough to show that for any left $\Lambda_{n}$-module $M$, an abelian subgroup $N\subset M$ is a left $\Lambda_{n}$-submodule if and only if it is a left $\Lambda'_{n}$-submodule (notice that $M$ has also a natural structure of left $\Lambda'_{n}$-module, induced by restriction of scalars; cf.\ Lemma~\ref{lm-Lambda'-->>Lambda}).
However, precisely because the algebra $\Lambda_n$ is a quotient of $\Lambda'_n$,
the category $\Lambda_{n}$-{\bf mod} is a full subcategory of $\Lambda'_{n}$-{\bf mod}, and this implies in particular that the set of all subobjects of a given $\Lambda_n$-module is the same in the two categories.
\end{proof}

\begin{Theorem}\label{mainthm} The component $\mathcal{H} (n,c) $ of the moduli
space $\mathcal{M}(\Lambda_n,\vec{v}_c, 1, \vartheta_c)$ defined by equations~\eqref{sliceeqs}
coincides with the whole of $\mathcal{M}(\Lambda_n,\vec{v}_c, 1, \vartheta_c)$.
\end{Theorem}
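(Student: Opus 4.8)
The plan is to prove the equivalent statement that the vanishing $f_1=\dots=f_{n-1}=0$ is automatic on semistable representations. By Theorem~\ref{thm45} the preimage $Z_n(c)=\operatorname{pr}^{-1}(\mathcal{H}(n,c))$ is precisely the locus of $\mathcal{R}^{\rm ss}(\Lambda_n,c)$ cut out by~\eqref{sliceeqs}, and there it coincides with the principal bundle $P^n(c)$. Since $\operatorname{pr}$ surjects onto $\mathcal{M}(\Lambda_n,\vec{v}_c,1,\vartheta_c)$ and $\mathcal{H}(n,c)=\operatorname{pr}(Z_n(c))$, the assertion is equivalent to the equality $\mathcal{R}^{\rm ss}(\Lambda_n,c)=Z_n(c)$. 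As the inclusion $Z_n(c)\subseteq\mathcal{R}^{\rm ss}(\Lambda_n,c)$ is clear, everything reduces to showing that a $\vartheta_c$-semistable representation necessarily has $f_1=\dots=f_{n-1}=0$: once this is granted, its relations~\eqref{eq-gen-Q1} reduce to (P1), regularity of the pencil supplies (P2), and the stability inequalities supply (P3), so the point indeed lands in $P^n(c)=Z_n(c)$.

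First I would propagate the regularity statement from $\Lambda'_n$ to $\Lambda_n$. By Lemma~\ref{lm-R_n(c)-in-R'_n(c)} one has $\mathcal{R}^{\rm ss}(\Lambda_n,c)\subseteq\mathcal{R}^{\rm ss}(\Lambda'_n,c)$ under the closed embedding $\mathcal{R}(\Lambda_n,c)\hookrightarrow\mathcal{R}(\Lambda'_n,c)$ described above, which does not alter the pair $(A_1,A_2)$ and hence leaves the pencil $A_1+\lambda A_2$ unchanged. Proposition~\ref{prop-regular-in-R'} then guarantees that $A_1+\lambda A_2$ is regular at every point of $\mathcal{R}^{\rm ss}(\Lambda_n,c)$; for $n=2$ the conclusion is immediate since $\Lambda'_2=\Lambda_2$. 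This establishes condition (P2) on the whole of $\mathcal{R}^{\rm ss}(\Lambda_n,c)$.

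The crux is to deduce $f_1=\dots=f_{n-1}=0$ from regularity together with stability, and it is cleanest to work inside $\Lambda'_n$: by the inclusion above it suffices to prove the vanishing for every point of $\mathcal{R}^{\rm ss}(\Lambda'_n,c)$, where the $\mathrm{SO}(2,\mathbb{C})$-action is available. Using it I would normalise the regular pencil so that $A_2$ is invertible; then the first family in~\eqref{eq-(Q1')} gives $D_{q+1}=A_2^{-1}A_1 B_q$, and substituting into the second family turns them into the commutator-type identities $A_1 B_q A_2 - A_2 B_q A_1 = A_2 f_q e$ for $q=1,\dots,n-1$. For $c=1$ these force $f_q=0$ at once by commutativity of scalars, but for $c\geq 2$ the right-hand side does not vanish for purely algebraic reasons, and here stability must be fed in. Assuming some $f_q\neq 0$, I would mimic the inductive construction in the proof of Proposition~\ref{prop-regular-in-R'} and form the subrepresentation $(S_0,S_1)$ generated by $\operatorname{Im} f_1,\dots,\operatorname{Im} f_{n-1}$, so that $S_0\supseteq\operatorname{Im} f_i$ and condition~(Q$3'$) of Lemma~\ref{Q-Lemma} applies. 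Invertibility of $A_2$ gives $\dim S_1\geq\dim S_0$, and by taking $S_0$ to be $A_2^{-1}A_1$-invariant one arranges $S_1=A_2(S_0)$, so that $\dim S_0=\dim S_1$; if this subrepresentation is proper it violates the strict inequality that stability imposes in~(Q$3'$), while the complementary case, in which $\operatorname{Im} f$ generates everything, is excluded using condition~(Q$2'$) together with $e\neq 0$. Either way $f_q=0$ for all $q$.

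The vanishing just obtained descends through Lemma~\ref{lm-R_n(c)-in-R'_n(c)} to $\mathcal{R}^{\rm ss}(\Lambda_n,c)$, so $\mathcal{R}^{\rm ss}(\Lambda_n,c)\subseteq Z_n(c)$ and hence $\mathcal{R}^{\rm ss}(\Lambda_n,c)=Z_n(c)$; passing to the GIT quotient by $G_{\vec{v}_c}$ gives $\mathcal{M}(\Lambda_n,\vec{v}_c,1,\vartheta_c)=\mathcal{H}(n,c)$, as claimed. The main obstacle is the third paragraph: regularity alone is insufficient, since it does not kill the commutators $A_1 B_q A_2 - A_2 B_q A_1$ when $c\geq 2$, and the delicate point is to turn the hypothesis $f_q\neq 0$ into a genuinely destabilising subrepresentation. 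Unlike the singular-pencil argument of Proposition~\ref{prop-regular-in-R'}, which manufactures a polynomial solution $v(\lambda)$ of the pencil, here the pencil admits no such solution, so the contradiction must be extracted purely from the interplay of conditions (Q$2'$) and (Q$3'$) with the commutator identities.
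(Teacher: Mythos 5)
Your overall architecture agrees with the paper's: reduce the statement to showing that every point of $\mathcal{R}^{\rm ss}(\Lambda_n,c)$ lies in $Z_n(c)$, obtain regularity of the pencil $A_1+\lambda A_2$ on all of $\mathcal{R}^{\rm ss}(\Lambda_n,c)$ by combining Proposition~\ref{prop-regular-in-R'} with Lemma~\ref{lm-R_n(c)-in-R'_n(c)}, and then convert regularity into the vanishing \eqref{sliceeqs}. Your first two paragraphs are essentially the paper's proof. The difference is in how the last conversion is handled: the paper does not reprove the implication ``regular pencil $\Rightarrow$ $f_1=\cdots=f_{n-1}=0$ on the semistable locus''; it quotes it from the proof of Theorem~4.5 of~\cite{bblr}, where it is shown that $Z_n(c)$ coincides exactly with the open subset of $\mathcal{R}^{\rm ss}(\Lambda_n,c)$ on which condition (P2) holds. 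That citation closes the argument immediately. You instead attempt to prove this implication from scratch, and that attempt is where the gap lies.

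Concretely, your third paragraph does not work as sketched, for two reasons. First, you propose to prove the vanishing of the $f_q$ for every point of $\mathcal{R}^{\rm ss}(\Lambda'_n,c)$, but this is a strictly stronger claim than what is needed and is not justified: in $\Lambda'_n$ the constraints $B_q=D_q$ are dropped, each pair of relations in \eqref{eq-(Q1')} decouples, and with $A_2$ invertible one gets $f_qe=A_2^{-1}A_1B_qA_2-B_qA_1$ for an essentially arbitrary $B_q$; there is no reason this should vanish for all semistable $\Lambda'_n$-representations, so you should at least restrict to the image of $\mathcal{R}(\Lambda_n,c)$ before arguing. Second, the proposed contradiction fails: condition (Q$3'$) of Lemma~\ref{Q-Lemma} asserts only $\dim S_0\leq\dim S_1$, with no strictness clause, and even the stability refinement of \eqref{eq:stabnak2} permits a proper subrepresentation with $\dim S_0=\dim S_1<c$ containing all $\operatorname{Im} f_i$. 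So producing a subrepresentation with $S_0\supseteq\operatorname{Im} f_i$ and $\dim S_0=\dim S_1$ destabilizes nothing, and the asserted ``strict inequality that stability imposes in (Q$3'$)'' does not exist. You acknowledge yourself that this step is the main obstacle; as written it is an open hole, and the clean way to fill it is the one the paper takes, namely invoking the identification $Z_n(c)=\{\text{(P2) holds}\}\subseteq\mathcal{R}^{\rm ss}(\Lambda_n,c)$ established in~\cite{bblr}.
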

\begin{proof}For each representation $(A_1,A_2;C_1, \dots,C_n;e;f_1,\dots,f_{n-1}) \!\in\! \mathcal{R}^{\rm ss}(\Lambda_n,c)$, equations~\eqref{sliceeqs} hold if and only if the pencil $A_1 + \lambda A_2$ is regular (condition (P2) in Section~\ref{background} and in \cite{bblr}):
\begin{itemize}\itemsep=0pt
 \item in the proof of Proposition~4.9 of~\cite{bblr} it has been shown that condition (P2) holds in $Z_n(c)=\operatorname{pr}^{-1}(\mathcal{H} (n,c))$; i.e., equations~\eqref{sliceeqs} imply the regularity of the pencil;
 \item further on, in the proof of Theorem 4.5 of \cite{bblr} it has been shown that $Z_n(c)$ actually coincides with the open subset of $\mathcal{R}^{\rm ss}(\Lambda_n,c)$ (denoted $\mathcal{R}_n(c)$ in \cite{bblr}) where condition (P2) is satisfied; i.e., the regularity of the pencil implies equations~\eqref{sliceeqs}.
 \end{itemize}
So, the orbit of a $\vartheta_c$-semistable representation \[(A_1,A_2;C_1, \dots,C_n;e;f_1,\dots,f_{n-1}) \in \mathcal{R}^{\rm ss}(\Lambda_n,c)\] lies in $\mathcal{H}(n,c)$ if and only if the pencil $A_1 + \lambda A_2$ is regular. Then the conclusion follows from Proposition \ref{prop-regular-in-R'} and Lemma \ref{lm-R_n(c)-in-R'_n(c)}.
\end{proof}

\section{A remark involving the 2-Kronecker quiver}
We want to rephrase Proposition \ref{prop-regular-in-R'} is a slightly different way which involves the Kronecker quiver
 with two arrows $Q_{K}$
$$
 \xymatrix@R-2.3em{
&\mbox{\scriptsize$0$}&&\mbox{\scriptsize$1$}\\
&\bullet\ar@/^2ex/[rr]^{a_{1}}\ar@/_2ex/[rr]_{a_{2}}&&\bullet
}
$$
The new claim, Proposition \ref{relGIT}, may be regarded as a statement in relative Geometric Invariant Theory.

The vector space of $\vec{v}_c=(c,c)$-dimensional representations of $Q_{K}$ is the space
$\operatorname{Rep}(Q_K, \vec{v}_c) = \operatorname{Hom}_{\mathbb{C}}( V_0, V_1)^{\oplus 2}$.
Since Definition \ref{def:stabil} only applies to framed quivers, we need a slightly different notion of semistability.
So we recall from \cite{KiQ, Rud97} that, given $\vartheta\in\mathbb R^2$, a $\vec{v}_c$-dimensional representation of $Q_K$ is said to be \emph{$\vartheta$-semistable} if, for any proper nontrivial subrepresentation supported by $(S_0,S_1)$ {$\subseteq (V_0,V_1)$}, one has
\begin{equation}\label{stabkro}
\frac{\vartheta\cdot(\dim S_0,\dim S_1)}{\dim S_0 +\dim S_1} \leq
\frac{\vartheta\cdot \vec{v}_c}{2 c } .
\end{equation}
A $\vartheta$-semistable representation is \emph{$\vartheta$-stable} if strict inequality holds in \eqref{stabkro}.

As in Section \ref{sectionmainresult}, we set $\vartheta_c = (2c, 1-2c)$.
\begin{Lemma}
\label{lm-Q_K-ss}
A point $(A_{1},A_{2})\in \operatorname{Rep}(Q_K, \vec{v}_c)$ is $\vartheta_{c}$-semistable if and only if
the matrix pencil $A_1+ \lambda A_2$ is regular.
\end{Lemma}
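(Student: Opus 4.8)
The plan is to first convert the $\vartheta_c$-semistability inequality \eqref{stabkro} into a transparent numerical condition. Writing $s_i=\dim S_i$ and using $\vartheta_c\cdot\vec v_c=2c\cdot c+(1-2c)\cdot c=c$, the right-hand side of \eqref{stabkro} equals $\tfrac12$. Substituting $\vartheta_c\cdot(s_0,s_1)=2c\,s_0+(1-2c)s_1$ and clearing the positive denominator $s_0+s_1$, the inequality becomes $(4c-1)(s_0-s_1)\le 0$; since $c\ge 1$ gives $4c-1>0$, this is equivalent to $s_0\le s_1$. Thus $(A_1,A_2)$ is $\vartheta_c$-semistable if and only if every subrepresentation $(S_0,S_1)$ of the Kronecker module satisfies $\dim S_0\le\dim S_1$. (The excluded cases $(S_0,S_1)=0$ and $(S_0,S_1)=(V_0,V_1)$ have $s_0=s_1$, so they impose no constraint and need not be tracked separately.) With this reformulation the lemma becomes a statement about matrix pencils.

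For the direction ``regular $\Rightarrow$ semistable'' I would argue directly. If $\det(\nu_1A_1+\nu_2A_2)\neq 0$ for some $[\nu_1,\nu_2]\in\mathbb P^1$, then $\nu_1A_1+\nu_2A_2\colon V_0\to V_1$ is an isomorphism, in particular injective. For any subrepresentation one has $A_1(S_0),A_2(S_0)\subseteq S_1$, hence $(\nu_1A_1+\nu_2A_2)(S_0)\subseteq S_1$, and injectivity forces $\dim S_0\le\dim S_1$, which is exactly the condition extracted in the first paragraph.

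The substantive direction is the converse, which I would establish in contrapositive form: a non-regular pencil yields a destabilizing subrepresentation. Non-regularity means $\det(\nu_1A_1+\nu_2A_2)$ vanishes at every point of $\mathbb P^1$; being a homogeneous polynomial of degree $c$ in two variables over $\mathbb C$, it vanishes identically, so $A_1+\lambda A_2$ is singular over $\mathbb C(\lambda)$. Clearing denominators in a nonzero kernel vector produces a minimal-degree polynomial solution $v(\lambda)=\sum_{\alpha=0}^{\varepsilon}(-\lambda)^\alpha v_\alpha$ of $(A_1+\lambda A_2)v(\lambda)=0$, which (as in the proof of Proposition~\ref{prop-regular-in-R'}) gives $A_1v_0=0$, $A_1v_\alpha=A_2v_{\alpha-1}$, and $A_2v_\varepsilon=0$. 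Setting $S_0=\langle v_0,\dots,v_\varepsilon\rangle$ and $S_1=A_1(S_0)$, the relations $A_2v_{\alpha-1}=A_1v_\alpha$ together with $A_2v_\varepsilon=0$ show $A_2(S_0)=A_1(S_0)=S_1$, so $(S_0,S_1)$ is a genuine subrepresentation. The main obstacle is the minimality bookkeeping: if $v_0=0$ one could factor out $-\lambda$ and obtain a solution of strictly smaller degree, contradicting minimality, so $v_0\neq 0$; since $A_1v_0=0$ this gives $\ker A_1\cap S_0\neq 0$ and hence $\dim S_1=\dim A_1(S_0)<\dim S_0$. The strict inequality simultaneously shows that $(S_0,S_1)$ is proper and nontrivial and that it violates the semistability condition $\dim S_0\le\dim S_1$, completing the argument. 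The only genuinely delicate point throughout is the existence of a minimal-degree polynomial kernel vector with $v_0\neq 0$; the remainder is the elementary dimension counting recorded above.
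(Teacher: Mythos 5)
Your argument is correct, and it diverges from the paper's proof in its second half. The first step is identical: both you and the paper reduce the inequality \eqref{stabkro} with $\vartheta_c=(2c,1-2c)$ to the condition $\dim S_0\le\dim S_1$ for all subrepresentations (your arithmetic $(4c-1)(s_0-s_1)\le 0$ checks out). From there the paper takes an indirect route: it shows that this numerical condition is equivalent to the statement that $\dim(A_1(S)+A_2(S))\ge\dim S$ for \emph{every} vector subspace $S\subseteq V_0$ (condition \eqref{eq-dimA(S)}), and then invokes Lemma~4.10 of \cite{bblr} for the equivalence of \eqref{eq-dimA(S)} with regularity of the pencil. You instead prove the equivalence with regularity directly and self-containedly: the easy direction via injectivity of $\nu_1A_1+\nu_2A_2$ on $S_0$, and the hard direction by extracting a minimal-degree polynomial kernel vector from a singular pencil and building the destabilizing subrepresentation $(S_0,A_1(S_0))$ from its coefficients --- in effect reproving the relevant half of the cited lemma by the same Kronecker-pencil technique that the paper deploys in the proof of Proposition~\ref{prop-regular-in-R'}. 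What the paper's route buys is brevity and a reusable intermediate characterization \eqref{eq-dimA(S)}; what yours buys is independence from the external reference. Your handling of the delicate points is sound: minimality forces $v_0\neq 0$ (else one divides by $-\lambda$), hence $\ker A_1\cap S_0\neq 0$ and $\dim A_1(S_0)<\dim S_0$, and this strict inequality also guarantees that the subrepresentation is proper and nontrivial; the degenerate case $\varepsilon=0$ causes no trouble since then $A_1v_0=A_2v_0=0$ and $(\langle v_0\rangle,0)$ already destabilizes.
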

\begin{proof}
Let $(A_1, A_2)$ be a representation of $Q_K$ supported by the pair of vector spaces $(V_0,V_1)$, and consider a proper subrepresentation supported by $(S_0,S_1)$.
If the stability parameter is $\vartheta_c = (2c, 1-2c)$, the inequality \eqref{stabkro} is equivalent to
\begin{equation*}
\frac{2c \dim S_0 + (1-2c)\dim S_1}{\dim S_0 +\dim S_1} \leq \frac{1}{2} ,
\end{equation*}
which is in turn equivalent to
\begin{equation}
\dim S_{0}\leq \dim S_{1} .
\label{eq-mu_k(S)'}
\end{equation}
It is not hard to show that \eqref{eq-mu_k(S)'} implies
\begin{equation}\label{eq-dimA(S)}
\dim(A_{1}(S)+A_{2}(S))\geq\dim S \quad\text{for all vector subspaces $S\subseteq V_{0}$.}
\end{equation}
 Conversely, if condition \eqref{eq-dimA(S)} is satisfied, then, given any subrepresentation supported by $S=(S_0, S_1)$,
 one has
 \begin{equation*}
\dim S_{1}\geq \dim(A_{1}(S_{0})+A_{2}(S_{0})) \geq \dim S_{0} .
\end{equation*}
Finally, by \cite[Lemma 4.10]{bblr} condition \eqref{eq-dimA(S)} is equivalent to the fact that
the matrix pencil $A_1+ \lambda A_2$ is regular.
\end{proof}

Recall that $\mathcal{R}(\Lambda_n, c)$ is the affine subvariety of
\[
\operatorname{Rep}(Q_n, \vec{v}_c, 1) = \operatorname{Hom}_\mathbb{C}(V_0,V_1)^{\oplus2}\oplus\operatorname{Hom}_\mathbb{C}(V_1,V_0)^{\oplus n}\oplus\operatorname{Hom}_\mathbb{C}(V_0,W)\oplus\operatorname{Hom}_\mathbb{C}(W,V_0)^{\oplus n-1}
\]
defined by equations \eqref{eq-gen-Q1}.
Let us denote by $\pi_n \colon \mathcal{R}(\Lambda_n, c) \to \operatorname{Rep}(Q_K, \vec{v}_c)$ the restriction of the natural projection
$\operatorname{Rep}(Q_n, \vec{v}_c, 1) \to \operatorname{Hom}_\mathbb{C}(V_0,V_1)^{\oplus2}= \operatorname{Rep}(Q_K, \vec{v}_c)$.

As a straightforward consequence of Lemma \ref{lm-Q_K-ss}, Proposition \ref{prop-regular-in-R'} may be rephrased in the following terms.
\begin{Proposition}\label{relGIT} Each $(\vec{v}_c, 1)$-dimensional $\vartheta_c$-semistable representation of $\Lambda_n$ is mapped by $\pi_n$ to a $\vec{v}_c$-dimensional $\vartheta_c$-semistable representation of $Q_K$:
\[\pi_n\bigl( \mathcal{R}^{\rm ss}(\Lambda_n, c)\bigr) \subseteq \operatorname{Rep}(Q_K, \vec{v}_c)_{\vartheta_c}^{\rm ss} .\]
\end{Proposition}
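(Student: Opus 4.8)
The plan is to chain together the three results already established, so that the statement reduces to tracking a single representation through the identifications. I would start with an arbitrary point $\rho = (A_1, A_2; C_1, \dots, C_n; e; f_1, \dots, f_{n-1}) \in \mathcal{R}^{\rm ss}(\Lambda_n, c)$; by definition of $\pi_n$ its image is the pair $(A_1, A_2) \in \operatorname{Rep}(Q_K, \vec{v}_c)$, so the only thing to prove is that this pair is $\vartheta_c$-semistable.

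The first step is to promote $\rho$ from a semistable representation of $\Lambda_n$ to one of $\Lambda'_n$. For $n \geq 3$ I would use the closed embedding $\mathcal{R}(\Lambda_n, c) \hookrightarrow \mathcal{R}(\Lambda'_n, c)$ (cutting out the locus $B_q = D_q$) together with Lemma~\ref{lm-R_n(c)-in-R'_n(c)}, which identifies $\mathcal{R}^{\rm ss}(\Lambda_n, c)$ with $\mathcal{R}^{\rm ss}(\Lambda'_n, c) \cap \mathcal{R}(\Lambda_n, c)$; hence $\rho$ lies in $\mathcal{R}^{\rm ss}(\Lambda'_n, c)$. For $n = 2$ this step is vacuous, since $\Lambda'_2 = \Lambda_2$ and therefore $\mathcal{R}^{\rm ss}(\Lambda'_2, c) = \mathcal{R}^{\rm ss}(\Lambda_2, c)$.

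The second step applies Proposition~\ref{prop-regular-in-R'}: since $\rho \in \mathcal{R}^{\rm ss}(\Lambda'_n, c)$, the associated matrix pencil $A_1 + \lambda A_2$ is regular. The third and final step invokes Lemma~\ref{lm-Q_K-ss}, which says precisely that regularity of $A_1 + \lambda A_2$ is equivalent to $\vartheta_c$-semistability of $(A_1, A_2)$ as a representation of $Q_K$. Composing the three implications gives $\pi_n(\rho) \in \operatorname{Rep}(Q_K, \vec{v}_c)^{\rm ss}_{\vartheta_c}$, and since $\rho$ was arbitrary this yields the claimed inclusion.

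The genuine content has already been absorbed into Proposition~\ref{prop-regular-in-R'}, whose proof constructs the obstructing subrepresentation $(S_0, S_1)$ and exploits the ${\rm SO}(2, \mathbb{C})$ action; at this stage there is therefore no real obstacle to overcome. The only point requiring a moment of care is the bookkeeping between the cases $n = 2$ and $n \geq 3$, because Lemma~\ref{lm-R_n(c)-in-R'_n(c)} is stated only for $n \geq 3$ — but, as noted, the $n = 2$ case collapses to an identity and needs nothing beyond $\Lambda'_2 = \Lambda_2$.
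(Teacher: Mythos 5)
Your proposal is correct and follows exactly the route the paper intends: the paper presents Proposition~\ref{relGIT} as a ``straightforward consequence'' of Lemma~\ref{lm-Q_K-ss} and Proposition~\ref{prop-regular-in-R'}, with the passage from $\Lambda_n$ to $\Lambda'_n$ via Lemma~\ref{lm-R_n(c)-in-R'_n(c)} (and the identity $\Lambda'_2=\Lambda_2$ for $n=2$) being the same bookkeeping you spell out. Your write-up simply makes explicit the chain of implications the paper leaves implicit.
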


\subsection*{Acknowledgements}

This research was partly supported by INdAM-GNSAGA and PRIN ``Geometria delle variet\`a algebriche''. V.L.~was supported by the FAPESP Postdoctoral Grants No. 2015/07766-4 (Bolsa no Pa\'is) and 2017/22052-9 (Bolsa Est\'agio de Pesquisa no Exterior). This research was partly carried out while he was visiting the Department of Mathematics of the Northeastern University (Boston), and he wishes to thank that institution for hospitality. We thank the referees for their valuable suggestions which allowed us to substantially improve the presentation.

\pdfbookmark[1]{References}{ref}
\LastPageEnding

\end{document}